\date{}
\newtheorem{theorem}{Theorem}[section] \newtheorem{lemma}[theorem]{Lemma}
 \newtheorem{remark}[theorem]{Remark}
\newtheorem{proposition}[theorem]{Proposition}
\newtheorem{example}[theorem]{Example} \numberwithin{equation}{section}
\begin{document}

\vspace*{0.2cm}

\centerline{\Large\bf On the well-posedness of the Cauchy problem}

\vskip .1in

\centerline{\Large\bf for Fokker--Planck--Kolmogorov equations}

\vskip .1in

\centerline{\Large\bf with potential terms on  arbitrary  domains}

\vspace*{0.1in}

\quad {\sc OXANA A. MANITA}${}^{1}$,

\quad {\sc STANISLAV V.~SHAPOSHNIKOV}${}^{2}$

\vspace*{0.1in}

\quad ${}^{1}$ Department of Mechanics and Mathematics, Moscow State
University, 119991 Moscow, Russia

\quad ${}^{2}$ Department of Mechanics and Mathematics, Moscow State
University, 119991 Moscow, Russia

\vskip 0.3in

{\it We study the Cauchy problem for Fokker--Planck--Kolmogorov equations with
unbounded and degenerate coefficients. Sufficient conditions for the existence and
uniqueness of solutions are indicated. }

\vskip 0.2in

{\bf Keywords} \ Fokker--Planck--Kolmogorov equation; Cauchy problem; diffusion process

\vskip 0.2in

{\bf Mathematics Subject Classification} 35K10, 35K12, 60J35, 60J60, 47D07

\section{\sc Introduction}

In this paper we study the Cauchy problem for the Fokker--Planck--Kolmogorov
equation \begin{equation}\label{e1}
\partial_t\mu=\partial_{x_i}\partial_{x_j}\bigl(a^{ij}\mu\bigr)-
\partial_{x_i}\bigl(b^i\mu\bigr)+c\mu, \quad \mu|_{t=0}=\nu. \end{equation}
Throughout the paper summation over all repeated indices is meant.

Equations of this type  for transition probabilities of diffusion processes
were first derived by Kolmogorov in his famous paper \cite{K}. In the same paper,
the question  about the existence and uniqueness
of  probability solutions was posed (the case $c=0$).
The classical works \cite{Ar, Fr, Str-V, Tihonov, Vidder} deal with such equations with smooth
coefficients, having at most linear growth at infinity.

Equations with integrable and Sobolev coefficients in the class of bounded Borel
measures have been intensively studied in the past decade. For the
variational approach to (\ref{e1}) in the case of
unit diffusion matrix, a gradient drift and $c=0$, see \cite{JKO}.
The existence and uniqueness of  solutions given by flows of probability measures
in the case where $c=0$, the  diffusion matrix is  nondegenerate and Sobolev regular and
the drift  is integrable have been
studied in  \cite{BDR, BDRST, BKR-Servy, BRSH-Servy}. The papers
\cite{Figalli, LeBrL, RZ} deal with  equations with degenerate diffusion matrices.
In particular,  the solvability of the Cauchy problem for
equations with a degenerate Sobolev  regular $A$ in the class of densities under
certain growth restrictions on the lower order terms has been proved in   \cite{LeBrL}.
Relations
between the $L^1$- and $L^{\infty}$-uniqueness of semigroups, Liouville-type  theorems
and the uniqueness of the $L^1$-solution to the Cauchy problem for the Fokker--Planck--Kolmogorov
equation have been studied in  \cite{Ldm, Wang}.

We note that in the papers mentioned above only equations on all of $\mathbb{R}^d$ have been
considered. However,
 the problem of existence and uniqueness of solutions to the
Cauchy problem for the Fokker--Planck--Kolmogorov equation
with irregular coefficients on an arbitrary domain $D$ is also
of great interest.
For example in  \cite{K-G}, diffusion processes on an
arbitrary domain $D\subset\mathbb{R}^d$ were studied, in particular, the process
on  $D=(-1, 1)$ with generator  $$ Lu(x)=2^{-1}\bigl|1-
|x|\bigr|^{\alpha}u''(x)+\bigl({\rm tg }(-\pi x/2)+{\rm sgn} x\bigr)u'(x), \quad
\alpha>0. $$

The main difference between our results and the known ones is that we
consider equations with potential terms on arbitrary domains with an
arbitrary  probability measure as the initial data, but even for $c=0$ and $D=\mathbb{R}^d$ 
our results are new. We extend the sufficient conditions for the existence
of solutions obtained in \cite{BDR} to the case of nondegenerate
equations without restrictions on the smoothness of the diffusion matrix under the
assumption that the drift and potential are locally bounded. Also, we impose no
global restrictions on the coefficients to prove the existence. In the
case $c=0$, our method of constructing solutions differs from
the ones used in the papers mentioned above, namely, first we construct a
subprobability solution (this step is usually much easier) and then employ
a Lyapunov function to ensure that the constructed solution is a probability
solution.

Our uniqueness results are mostly extensions of the results in
\cite{BRSH-Servy} to the case of  equations with potentional terms on
arbitrary domains. However, an important difference is that
we have managed to eliminate the assumption (which was crucial
in the paper cited) that the Lyapunov functions involved 
are globally Lipschitzian.

To be more specific, we prove that under rather broad assumptions about
coefficients  $a^{ij}$, $b^i$, and $c$, the existence of a Lyapunov function $V$
(i.e., $V\in C^2(D)$ and $V(x)\to +\infty$ as $x\to \partial D$)
such that
$$ a^{ij}(x, t)\partial_{x_i}\partial_{x_j}V(x)+b^i(x, t)\partial_{x_i}V(x)+c(x, t)V(x)\le K+KV(x), \quad K>0 $$
ensures both the existence and
uniqueness of a solution to the  Cauchy problem (\ref{e1}) given by a flow of
subprobability measures $\mu_t$ such that the identity
$$
\mu_t(D)=\nu(D)+\int_0^t\int_{D}c(x, s)\,d\mu_s\,ds $$
holds. If $c=0$, then the measures $\mu_t$ are probability measures. In particular, if $D=\mathbb{R}^d$ and
$V(x)=|x|^2/2$, then for the existence and uniqueness it is enough to have the inequality
$$
{\rm tr}A(x, t)+(b(x, t), x)+|x|^2c(x, t)/2\le K+K|x|^2.
$$  

Thus, in the case of equations with unbounded coefficients we give an answer to the
question about the existence and uniqueness of solutions to the Cauchy problem
(\ref{e1}) posed by Kolmogorov in  \cite{K}.

We now proceed to the definitions and exact statements.

Let $T>0$ and let $D$ be an arbitrary open set in $\mathbb{R}^d$. We assume that
along with the domain $D$ an increasing sequence of bounded open sets $D_k$
is given such that  for every $k$ the closure  $\overline{D_k}$ of $D_k$ belongs to
$D_{k+1}$ and $\bigcup_{k=1}^{\infty}D_k=D$. For example, if $D=\mathbb{R}^d$,
then for  $D_k$ the ball of radius $k$ centered at the origin can be taken.

We shall say that a locally finite Borel measure $\mu$ on the strip $D\times(0,
T)$ is given by a flow of Borel measures  $(\mu_t)_{t\in (0, T)}$ if, for every
Borel set $B\subset D$, the mapping $t\mapsto \mu_t(B)$ is measurable and for every
function  $u\in C^{\infty}_0(D\times(0, T))$ one has
$$
\int_{D\times(0, T)}u(x, t)\,d\mu=\int_0^T\int_{D}u(x, t)\,d\mu_t\,dt.
$$
Obviously, the last identity
extends to all functions of the form  $fu$, where $u$ is as before and  $f$ is
$\mu$-integrable on every compact set in $D\times(0, T)$. For example, the transition
probabilities $\mu_t(B)=P(x_t\in B)$ of a stochastic process $x_t$ in
$D$ define a measure $\mu=\mu_t\,dt$ on $D\times(0, T)$.

Set
$$
L\varphi=a^{ij}\partial_{x_i}\partial_{x_j}\varphi+b^i\partial_{x_i}\varphi+c\varphi,
$$
where $a^{ij}$, $b^i$, $c$ are Borel functions on  $D\times[0, T]$ and
$A=(a^{ij})$ is a symmetric non-negative definite matrix (called the diffusion
matrix), i.e.,   $a^{ij}=a^{ji}$,
$(A(x, t)y, y)\ge 0$ for all  $(x, t)\in D\times[0, T]$ and all
$y\in\mathbb{R}^d$. The mapping  $b$ is called the drift coefficient and $c$ is called the potential.

We shall say that a measure $\mu=(\mu_t)_{t\in(0, T)}$ satisfies the  Cauchy
problem  (\ref{e1}) if $a^{ij}$, $b^i$ and $c$ belong to
$L^1(\overline{D_k}\times J, |\mu|)$ for each domain $D_k$ and each interval
$J\subset(0, T)$ and for every function  $\varphi\in C^{\infty}_0(D)$ the
following identity holds:
\begin{equation}\label{ee1}
\int_{D} \varphi d\mu
_{t}-\int_{D} \varphi d\nu =\lim_{\varepsilon\to 0+}
\int_{\varepsilon}^{t}\int_{D} L\varphi d\mu _{s}ds
\end{equation}
for a.e. $t\in(0, T)$. We note that in general the set of points  $t$ for which the identity
(\ref{ee1}) holds depends on  $\varphi$. If the function
$\displaystyle t\to\int_{D}\varphi\,d\mu_t$ is continuous on $(0, T)$, then identity
(\ref{ee1}) holds for all  $t\in[0, T)$. If one has the inclusion
$L\varphi\in L^1(D\times[0, T])$, then
$$
\lim_{\varepsilon\to 0+}
\int_{\varepsilon}^{t}\int_{D} L\varphi d\mu _{s}ds=
\int_{0}^{t}\int_{D}L\varphi d\mu _{s}ds.
$$

We shall also use another definition of a solution, which is, however, equivalent
to the previous one (see \cite{BRSH-Servy}). Namely, the measure
$\mu=(\mu_t)_{0<t<T}$ satisfies  $\partial_t\mu=L^{*}\mu$ if
$$
\int_0^T\int_{D}\bigl[\partial_tu+Lu\bigr]\,d\mu_t\,dt=0 \quad \forall u\in
C^{\infty}_0(D\times(0, T)).
$$
The measure $\mu=(\mu_t)_{0<t<T}$ satisfies the
initial condition $\mu|_{t=0}=\nu$ if, for each function  $\varphi\in
C^{\infty}_0(D)$, there exists a full Lebesgue measure set $J_{\varphi}\subset(0,
T)$ such that
$$ \lim_{t\to 0, t\in
J_{\varphi}}\int_{D}\varphi\,d\mu_t=\int_{D}\varphi\,d\nu.
$$
As before, if the
function  $\displaystyle t\to\int_{D}\varphi\,d\mu_t$ is continuous on  $(0, T)$, then
$J_{\varphi}=(0, T)$.

We shall  always assume that $c\le 0$. This assumption can
obviously be replaced with  $c\le c_0$ for some number  $c_0$.
Indeed, in order to remove $c_0$ it suffices to consider $e^{-c_0t}\mu_t$ in place of $\mu_t$.

We study the existence and uniqueness in the class
$\mathcal{M}_{\nu}$ of measures  $\mu$ given by flows of such nonnegative measures
$(\mu_t)_{0<t<T}$ that  $\mu$ is a solution to the  Cauchy problem (\ref{e1}),
$|c|\in L^1(D\times(0, T), \mu)$ and for a.e.  $t\in(0, T)$ the inequality
\begin{equation}\label{con1}
\mu_t(D)\le
\nu(D)+\int_0^t\int_{D}c(x, s)\,\mu_s(dx)\,ds
\end{equation}
holds.

The main goal of this paper is to obtain sufficient conditions, under which the
set  $\mathcal{M}_{\nu}$ consists of exactly one element. Moreover, we are interested in
conditions that admit unbounded and degenerate coefficients.

\section{\sc Existence results}

In the present section we prove several existence theorems under different assumptions
about the coefficients.

\begin{theorem}\label{th1-1}
Suppose that  $c\le0$,  for each $k\in\mathbb{N}$
the coefficients $a^{ij},\, b^{i}$ and $c$ are bounded on $D_k\times[0, T]$ and
there exist  positive numbers ~$m_k$ and $M_k$ such that the inequality
$$
m_k|y|^{2}\le (A(x, t)y, y)\le M_k|y|^{2}
$$
holds for all  $y\in\mathbb{R}^d$ and
$(x, t)\in D_k\times[0, T]$. Then,  for every probability measure  $\nu$, the set
$\mathcal{M}_{\nu}$ is not empty.
\end{theorem}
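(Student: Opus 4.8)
The plan is to construct the solution by a double approximation: first localize to the bounded sets $D_k$, where the coefficients are bounded and $A$ is uniformly elliptic, solve a regularized Cauchy--Dirichlet problem there, and then exhaust $D$ by letting $k\to\infty$. \emph{Step 1 (regularized problems on $D_k$).} Fixing $k$ (and replacing $D_k$ by a slightly smaller domain with smooth boundary if necessary), I would mollify the coefficients to smooth $a^{ij}_\varepsilon, b^i_\varepsilon, c_\varepsilon$ on $\overline{D_k}\times[0,T]$, preserving the bounds $m_k|y|^2\le (A_\varepsilon y,y)\le M_k|y|^2$ and the sign $c_\varepsilon\le0$, and approximate $\nu$ by a smooth probability density $\varrho_0^\varepsilon$ with support in $D_k$. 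Classical parabolic theory then yields a unique smooth, nonnegative solution $\varrho^\varepsilon$ of the forward equation $\partial_t\varrho=\partial_{x_i}\partial_{x_j}(a^{ij}_\varepsilon\varrho)-\partial_{x_i}(b^i_\varepsilon\varrho)+c_\varepsilon\varrho$ on $D_k\times(0,T)$ with zero Dirichlet data and initial value $\varrho_0^\varepsilon$, nonnegativity coming from the parabolic maximum principle. \emph{Step 2 (mass estimate).} Integrating over $D_k$ and using the Dirichlet condition, the drift boundary term vanishes and the diffusion boundary term equals $\int_{\partial D_k}(A_\varepsilon n,n)\,\partial_n\varrho^\varepsilon\,dS\le0$, since $\varrho^\varepsilon\ge0$ vanishes on $\partial D_k$, so $\partial_n\varrho^\varepsilon\le0$; hence for every $t$,
$$
\int_{D_k}\varrho^\varepsilon(x,t)\,dx+\int_0^t\int_{D_k}|c_\varepsilon|\varrho^\varepsilon\,dx\,ds\le\int_{D_k}\varrho_0^\varepsilon\,dx\le1.
$$
This gives simultaneously the subprobability bound and a uniform $L^1$-bound on $|c_\varepsilon|$ against $\mu^\varepsilon_t:=\varrho^\varepsilon(\cdot,t)\,dx$, independent of $\varepsilon$ and $k$.

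For the passage to the limit I would use interior parabolic estimates on $D_k\times(0,T)$ (local boundedness and H\"older continuity, uniform in $\varepsilon$) together with the equicontinuity of $t\mapsto\int\varphi\,d\mu^\varepsilon_t$ for each fixed $\varphi\in C^\infty_0(D_k)$, the latter following from the boundedness of $L_\varepsilon\varphi$ on $\mathrm{supp}\,\varphi$. Along a subsequence $\mu^\varepsilon_t$ then converges vaguely, uniformly in $t$, to a flow of subprobability measures $\mu^k_t$ on $D_k$, and passing to the limit in the weak formulation (using $a^{ij}_\varepsilon\to a^{ij}$ a.e.\ and local equi-integrability of the densities $\varrho^\varepsilon$) shows that $\mu^k$ satisfies $\partial_t\mu=L^*\mu$ on $D_k$, while the estimate of Step 2 survives by lower semicontinuity. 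Repeating the compactness argument in $k$ — extending each $\mu^k_t$ by zero to $D$ — I extract a further subsequence $\mu^k\to\mu$ converging vaguely and uniformly in $t$ to a flow of subprobability measures on $D$. For any $\varphi\in C^\infty_0(D)$ one has $\mathrm{supp}\,\varphi\subset D_m$ for some $m$, so $\varphi$ is an admissible test function for every $k\ge m$ and the coefficients are bounded on $\mathrm{supp}\,\varphi$; hence the weak identity passes to the limit and $\mu$ solves (\ref{e1}).

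It remains to recover the membership conditions for $\mathcal{M}_\nu$. Rather than re-deriving the mass relation from the limiting equation — which would force me to control the terms supported near $\partial D$ and would require a Lyapunov function not available here — I would extract inequality (\ref{con1}) directly from the per-$k$ estimates. Since $|c|\ge0$ is, on each $D_m$, an increasing limit of continuous compactly supported functions, the uniform $L^1$-bound of Step 2 and vague convergence give $\int_0^T\int_D|c|\,d\mu_s\,ds\le1$ by lower semicontinuity, so $|c|\in L^1(D\times(0,T),\mu)$; the same lower semicontinuity, applied on each interval $(0,t)$, yields $\limsup_k\int_0^t\int_{D_k}c\,d\mu^k_s\,ds\le\int_0^t\int_D c\,d\mu_s\,ds$. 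Combining this with $\mu_t(D)\le\liminf_k\mu^k_t(D)$ and the per-$k$ bound $\mu^k_t(D)\le\nu(D_k)+\int_0^t\int_{D_k}c\,d\mu^k_s\,ds$, together with $\nu(D_k)\to\nu(D)$, gives (\ref{con1}) for a.e.\ $t$, so $\mu\in\mathcal{M}_\nu$.

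The main obstacle I anticipate is the limit in the coefficient terms in Step 3: ensuring that the products $a^{ij}_\varepsilon\varrho^\varepsilon$, $b^i_\varepsilon\varrho^\varepsilon$, and $c_\varepsilon\varrho^\varepsilon$ converge rests on the local equi-integrability of the densities $\varrho^\varepsilon$ supplied by the parabolic regularity theory, and the careful sign-tracking in the last step is what keeps the potential term on the correct side of inequality (\ref{con1}).
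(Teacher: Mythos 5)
Your overall architecture mirrors the paper's (mollify the coefficients, solve smooth approximating problems, get equicontinuity of $t\mapsto\int\varphi\,d\mu_t^\varepsilon$, extract a limit, pass to the limit in the weak formulation, then recover the mass inequality); the main variation — Cauchy--Dirichlet problems on $D_k$ instead of the paper's global Cauchy problems on $\mathbb{R}^d$ with $A$ extended by the identity matrix outside $D_n$ — is legitimate, and your boundary-flux sign argument in Step 2 is correct. However, there are two genuine gaps, both traceable to the same missing ingredient. First, your compactness/limit step rests on ``interior parabolic estimates (local boundedness and H\"older continuity, uniform in $\varepsilon$)'' for the densities $\varrho^\varepsilon$. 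Under the hypotheses of this theorem $a^{ij}$ is merely Borel measurable, bounded and uniformly elliptic on each $D_k\times[0,T]$; mollification preserves these bounds but gives no uniform modulus of continuity, and for equations in double-divergence form $\partial_t\varrho=\partial_{x_i}\partial_{x_j}(a^{ij}\varrho)-\dots$ there is no De Giorgi--Nash--Moser theory yielding H\"older (or even $L^\infty_{loc}$) bounds that are uniform over measurable coefficients. The estimate that actually holds uniformly in $\varepsilon$ — and the one the paper invokes — is the $L^{(d+1)/d}_{loc}$ bound of Bogachev--Krylov--R\"ockner (Corollary 3.2 of \cite{BKR}), which depends only on the ellipticity constants and the sup-norms of the coefficients. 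This weak-$L^{(d+1)/d}$ compactness is exactly what is needed to pass to the limit in $\int_s^t\int L_\varepsilon\varphi\,\varrho^\varepsilon\,dx\,d\tau$, because $L_\varepsilon\varphi\to L\varphi$ only in $L^{d+1}_{loc}$ (not uniformly), so one needs a strong-times-weak pairing; mere vague convergence of the measures $\mu_t^\varepsilon$ is not enough. (Uniform H\"older bounds do hold in the paper's Theorem 2.3, but only because there $a^{ij}$ is assumed Sobolev regular, via Corollary 3.9 of \cite{BKR}.)

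Second, your derivation of (\ref{con1}) uses the claim that $|c|$ is, on each $D_m$, an increasing limit of continuous compactly supported functions, so that $\int|c|\,d\mu\le\liminf_k\int|c|\,d\mu^k$ by vague convergence. This is false for a general Borel $c$: increasing limits of continuous functions are exactly the lower semicontinuous functions, and the lower-semicontinuity inequality for $\mu\mapsto\int f\,d\mu$ under vague convergence genuinely fails for Borel, non-lsc $f$ (e.g.\ $f=1_{\mathbb{Q}}$, $\delta_{x_k}\to\delta_0$ with $x_k$ irrational). The paper avoids this by never leaving the density framework: the approximating inequality is stated with $c_n\to c$ strongly in $L^{d+1}_{loc}$ paired against $u_n\rightharpoonup u$ weakly in $L^{(d+1)/d}_{loc}$, so the potential term converges, and Fatou/monotone convergence is applied only in the spatial cutoff $\psi_N\uparrow 1$, whose integrands are continuous in $x$. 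Both of your gaps close simultaneously once you replace the unjustified H\"older claim by the \cite{BKR} $L^{(d+1)/d}$ estimate and carry the weak-$L^{(d+1)/d}$ convergence of the densities through both limits ($\varepsilon\to0$, then $k\to\infty$); a final minor bookkeeping point is that for fixed $k$ your initial data $\varrho_0^\varepsilon$ supported in $D_k$ cannot converge weakly to $\nu$ itself, so the initial condition is only recovered in the second limit, which your double-limit scheme must (and can) track explicitly.
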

\begin{proof} We divide the proof into several steps.

1. We set  $a^{ij}(x, t)=0$, $b^{i}(x, t)=0$ and $c(x, t)=0$ if
$t\not\in [0, T]$ or if  $t\in[0, T]$, but  $x\not\in D$. Let $\omega$ be a
homogenization kernel, i.e.,
$$ \omega\in C^{\infty}_0(\mathbb{R}^{d+1}), \quad
\omega\ge 0, \quad \int_{\mathbb{R}^{d+1}}\omega(x, t)\,dx\,dt=1.
$$
Set
$\omega_{\varepsilon}(x, t)=\varepsilon^{-d-1}\omega(x\varepsilon^{-1},
t\varepsilon^{-1})$.
Let $I_n$ be the indicator of $D_n\times[0, T]$. Let
$$
a^{ij}_{n}=(a^{ij}I_n+\delta^{ij}(1-I_n))*\omega_{1/n}, \quad
b^i_n=(b^iI_n)*\omega_{1/n}, \quad {\rm and} \quad c_n=(cI_n)*\omega_{1/n}.
$$
It is quite obvious
that for every fixed  $n$ the functions $a^{ij}_n$, $b^i_n$ and $c_n$ are
smooth and uniformly bounded together with all their derivatives. Moreover,
$(A_n(x, t)y, y)\ge |y|^2\min\{m_n, 1\}$ for all  $t$, $x$ and $y$, here $m_n$
is a number from the assumptions of the theorem corresponding to the set
$D_n\times[0, T]$. Finally,  for every $k$ and $p\ge 1$ the sequences
$a^{ij}_n$, $b^i_n$, $c_n$ converge in  $L^p(D_k\times[0, T])$ to the functions
$a^{ij}$, $b^i$ and $c$ respectively.

We extend  the measure $\nu$ by zero outside  $D$ to a measure on $\mathbb{R}^d$.
Let  $\eta_{n}\in C_{0}^{\infty}(D)$ be a sequence of non-negative functions
such that  $\eta_{n}\,dx$ are probability measures on $\mathbb{R}^d$
weakly convergent to $\nu$.

On $\mathbb{R}^d\times[0, T]$ we consider the Cauchy problem
$$
\partial_{t}u_{n}=\partial_{x_{i}}\partial_{x_{j}}(a_{n}^{ij}u_{n})-\partial_{x_{i}}(b_{n}^{i}u_{n})
+c_{n}u_{n},\quad u_{n}|_{t=0}=\eta_{n}.
$$
Let us rewrite it as
\begin{equation*}
\partial_{t}u_{n}=a_{n}^{ij}\partial_{x_{i}x_{j}}u_{n}
+\left(2\partial_{x_{j}}a_{n}^{ij}-b_{n}^{i}\right)\partial_{x_{i}}u_{n}
+\left(\partial_{x_{i}x_{j}}a_{n}^{ij}-\partial_{x_{i}}b_{n}^{i}+q_{n}\right)u_{n},
\quad u_{n}|_{t=0}=\eta_{n}.
\end{equation*}

Here all the coefficients are smooth and bounded for each $n$.
It is well-known (see \cite{Fr}, Chapter 1,~\S7, Theorem 12) that there exists a
smooth bounded non-negative classical solution $u_n\in
L^{1}(\mathbb{R}^{d}\times[0,T])$. Further, a classical solution  $\{u_n\}$ is also
a weak solution in sense of distributions, thus  for every function
$\psi\in C_{0}^{\infty}(\mathbb{R}^{d})$ one has
\begin{equation}\label{eq-un}
\int_{\mathbb{R}^d}\psi(x)u_{n}(x,t)\,dx=
\int_{\mathbb{R}^d}\psi(x)\eta_{n}(x)\,dx +\int_{0}^{t}\int_{\mathbb{R}^d}
L_{n}(x, s)\psi u_{n}(x,s)\,dx\,ds.
\end{equation}
Let $\zeta\in
C_{0}^{\infty}\left(\mathbb{R}^d\right)$ be such that  $\zeta(x)=1$ if  $|x|\le
1$, $\zeta(x)=0$ if $|x|>2$, $|\zeta|\le 1$ and it has two bounded derivatives.
Substitute  $\psi(x)=\zeta(x/N)$ in the equality \ref{eq-un}
and let $N$ go to infinity. For every fixed $n$
the functions $a_{n}^{ij},\, b_{n}^{i},\, c_{n}$ are globally bounded, thus
Lebesgue's dominated theorem yields
\begin{equation*}
\int_{\mathbb{R}^d}u_{n}(x,t)\,dx =\int_{\mathbb{R}^d}\eta_{n}(x)\,
dx+\int_{0}^{t}\int_{\mathbb{R}^d}c_{n}(x,s)u_{n}(x,s)\, dx\, ds.
\end{equation*}
In particular, the measures $u_{n}(x,t)\, dx$ are subprobability
measures for all $t\in[0, T]$. Taking into account that  $\eta_n=0$ outside  $D$
and $c_n\le 0$, we obtain
\begin{equation}\label{con1ap}
\int_{D}u_{n}(x,t)\,dx\le \int_{D}\eta_{n}(x)\,
dx+\int_{0}^{t}\int_{D}c_{n}(x,s)u_{n}(x,s)\, dx\, ds.
\end{equation}

2. We choose a convergent subsequence in  $\{u_{n}\}$.

We observe that for fixed $k$ and sufficiently large $n$
the estimate $(A_n(x, t)y, y)\ge |y|^2\min\{m_{k+1}, 1\}$ holds for all  $(x, t)\in D_k\times[0, T]$
and $y\in\mathbb{R}^d$. Indeed,
$$ (a^{ij}I_n+(1-I_n)\delta^{ij})*\omega_{1/n}(x, t)=
(a^{ij}I_{k+1}+(1-I_{k+1})\delta^{ij})*\omega_{1/n}(x, t)
$$
if $(x, t)\in D_k\times[0, T]$ and
$n$ is large enough so that   $\mbox{supp}\,(y,
\tau)\mapsto\omega_{1/n}(y-x, \tau-t) \subset D_{k+1}\times(-1, T+1)$.
Similarly, for fixed  $n$ (large enough) and $k$  one has
$$
\|a^{ij}_n\|_{L^{\infty}(D_k\times[0, T])}\le
\|a\|_{L^{\infty}(D_{k+1}\times[0, T])}+1,\quad
\|b^i_n\|_{L^{\infty}(D_k\times[0, T])}\le
\|b^i\|_{L^{\infty}(D_{k+1}\times[0, T])}, $$ $$
\|c_n\|_{L^{\infty}(D_k\times[0, T])}\le
\|c\|_{L^{\infty}(D_{k+1}\times[0, T])}.
$$
Due to  \cite[Corollary  3.2]{BKR}, for every $k>2$ one has
$$
\int_{D_k\times [Tk^{-1}, T(1-k^{-1})]}u_{n}^{(d+1)/d}\, dx\, dt\le C_k,
$$
where  $C_k$ depends only on  $m_{k+1}$, $\|a\|_{L^{\infty}(D_{k+1}\times[0, T])}$,
$\|b^i\|_{L^{\infty}(D_{k+1}\times[0, T])}$, $\|c\|_{L^{\infty}(D_{k+1}\times[0, T])}$
and does not depend on $n$.

Since the unit ball in  $L^{(d+1)/d}$ is weakly compact, for every $k>2$ one can
extract from $\{u_{n}\}$ a  subsequence weakly convergent in
$L^{(d+1)/d}(D_k\times [Tk^{-1}, T(1-k^{-1})])$.
Without loss of generality, using the diagonal procedure,  we can assume that
$\{u_{n}\}$  converges weakly to a non-negative function  $u$ that belongs to
$L^{(d+1)/d}(D_k\times [Tk^{-1}, T(1-k^{-1})])$ for every $k$.

3. As we have shown above, for every $k$ the coefficients $a_{n}^{ij}$,
$b_{n}^{i}$ and $c_{n}$ are uniformly  bounded ( with respect to $n$) on
$D_k\times[0, T]$. Let  $\psi\in C^{\infty}_0(D)$.
Then $\mbox{supp}\,\psi \subset D_k$
for some  $k$ and there exists  a number  $C(\psi)$
(independent of $n$) such that
$$
\left|\int_{D}\psi(x)u_{n}(x,t)\,
dx- \int_{D}\psi(x)u_{n}(x,s)\, dx\right| \\
=\left|\int_{s}^{t}\int_{D}\left(L_{n}\psi(x, \tau)\right)u_{n}(x,
\tau)dx\,d\tau\right| \le C(\psi)|t-s|
$$
for all  $n$ and $s,
t\in[0, T]$. Hence the functions $$ f_{n}(t):=\int_{D}\psi(x)u_{n}(x,t)\, dx $$
are Lipschitzian with constant  $C(\psi)$ independent of $n$.
Thus, this is a uniformly bounded and equicontinuous family of functions
for every fixed
$\psi$. Hence,
by to Arzel\`a--Ascoli theorem there exists a subsequence, uniformly convergent on $[0, T]$. We observe that in the space
$L^{(d+1)/d}([0,T])$ the same subsequence converges to
$$
f(t):=\int_{D}\psi(x)u(x,t)\, dx $$ and, since the weak and uniform limits coincide
a.e., every subsequence of  $\{f_{n}\}$ (and thus the whole sequence as well) converges
uniformly to the same Lipschitzian function $\widetilde{f}$, which coincides with
$f$ on a full Lebesgue measure set in  $[0,T]$. Obviously this set depends on
$\psi$. Denote it by  $\mathbb{T}(\psi)$.

4. Let us show that $u$ constructed at Step  2 is a solution to (\ref{e1}).
Fix  $\psi\in C_{0}^{\infty}(\mathbb{R}^{d})$ with  support in some  $D_k$. We
have  $L\psi\in L^{\infty}(D_k\times[0, T])$ by the assumptions of the theorem.
Moreover, there is  a number  $C_k$ such that
$\sup_n\|L_n\psi\|_{L^{\infty}(D_k\times[0, T])}\le C_k$, and $L_n\psi$
converges to  $L\psi$ in $L^p(D_k\times[0, T])$ for every $p\ge 1$. Take  $t$
in the full measure set  $\mathbb{T}(\psi)$; for every element of this set  convergence
$$ \int_D\psi(x)u_{n}(x,t)\, dx\to\int_D\psi(x)u(x,t)\, dx $$
takes place. Let  $0<s<t$.
One has
\begin{multline*}
\Bigl|\int_D\psi(x)u_{n}(x,t)\, dx-
\int_D\psi(x)\eta_{n}(x)\, dx -\int_{s}^{t}\int_D L_{n}\psi u_{n}(x,\tau)\, dx\,
d\tau\Bigr|=
\\
=\Bigl|\int_D\psi(x)u_{n}(x,s)\, dx-\int_D\psi(x)\eta_{n}(x)\,dx\Bigr|\le C(\psi)s,
\end{multline*}
where  $C(\psi)$ is independent of $n$ and $s$.
Hence,
\begin{equation}\label{ner}
\Bigl|\int_D\psi(x)u_{n}(x,t)\, dx- \int_D\psi(x)\eta_{n}(x)\, dx -
\int_{s}^{t}\int_D L_{n}\psi(x, \tau)u_{n}(x,\tau)\, dx\, d\tau\Bigr|\le
C(\psi)s.
\end{equation}
We observe that
$$ \lim_{n\to\infty}\int_{s}^{t}\int_D L_{n}\psi(x, \tau)u_{n}(x,\tau)\, dx\, d\tau=
\int_{s}^{t}\int_D L\psi(x, \tau)u(x,\tau)\, dx\, d\tau.
$$
Indeed,
\begin{multline*}
\Bigl|\int_{s}^{t}\int_D (L_{n}\psi(x, \tau))u_{n}(x, \tau)\, dx\, d\tau-
\int_{s}^{t}\int_D (L\psi(x, \tau))u(x, \tau)\, dx\, d\tau\Bigr|\le \\
\le\|L_n\psi-L\psi\|_{L^{d+1}(D_k\times[s, t])}\|u_n\|_{L^{(d+1)/d}(D_k\times[s,
t])}+ \\ +\Bigl|\int_{s}^{t}\int_D (L\psi(x, \tau))u_{n}(x,\tau)\, dx\, d\tau-
\int_{s}^{t}\int_D (L\psi(x, \tau))u(x,\tau)\, dx\, d\tau\Bigr|,
\end{multline*}
where the first summand in the right-hand side tends to zero due to
convergence of $L_n\psi$ to $L\psi$ and the uniform norm boundedness of $\{u_n\}$,
shown above. The second summand tends to zero by the weak convergence of $\{u_n\}$
and the boundedness of $L\psi$. Thus, letting  $n\to\infty$ in (\ref{ner}),
we obtain
$$
\Bigl|\int_D\psi(x)u(x,t)\, dx- \int_D\psi(x)\,d\nu -
\int_{s}^{t}\int_D L\psi u \, dx\, d\tau\Bigr|\le C(\psi)s.
$$
Letting  $s$ to zero, we arrive at the equality
$$
\int_D\psi(x)u(x,t)\, dx=\int_D\psi(x)\,d\nu+\int_{0}^{t}\int_D L\psi u\, dx\, d\tau.
$$
Hence, the function  $u$ is a non-negative solution to the Cauchy problem (\ref{e1}).

5. Let us show that the measure  $u(x,t)dx\, dt$ is a solution in the class $\mathcal{M}_{\nu}$.
We recall that  $c_{n}\le0$ and $\eta_n\,dx$ are probability measures.
Due to  (\ref{con1ap}), for every function  $\psi\in C_{0}^{\infty}(D)$, $0\le\psi\le1$
the following inequality holds:
\begin{equation}\label{ner1}
\int_D\psi(x)u_{n}(x,t)\, dx-\int_{0}^{t}\int_D\psi(x)c_{n}(x,s)u_{n}(x,s)\, dx\, ds\le1.
\end{equation}
Let $\psi_{N}\in C^{\infty}_0(D)$, $0\le\psi_N\le 1$ and $\psi_N(x)=1$ if $x\in D_N$.
Let $t$ also belong to the full measure set
$\mathbb{T}=\bigcap\limits _{N\in\mathbb{N}}\mathbb{T}\left(\psi_{N}\right)$,
i.e.,  for all  $N\in\mathbb{N}$ the following convergence takes place:
$$
\int_D\psi_{N}(x)u_{n}(x,t)\, dx\to\int_D\psi_{N}(x)u(x,t)\, dx.
$$
Substituting such $\psi_{N}$ and $t$ into  (\ref{ner1}) and letting  $n\rightarrow\infty$,
we obtain
$$
\int_{D}\psi_{N}(x)u(x,t)\, dx-\int_{0}^{t}\int_{D}\psi_{N}(x)c(x,s)u(x,s)\, dx\, ds\le 1.
$$
Finally, letting  $N\rightarrow\infty$ and applying Fatou's lemma, we arrive at the required inequality
$$
\int_D u(x,t)\, dx-\int_{0}^{t}\int_D c(x,s)u(x,s)\, dx\, ds\le 1=\nu(D).
$$
This completes the proof.
\end{proof}

\begin{remark}\rm
Due to \cite[Corollary  3.2]{BKR}, under the assumptions of Theorem \ref{th1-1}
every solution  $\mu$ in $\mathcal{M}_{\nu}$ is given by a density
$\varrho\in L^{(d+1)/d}_{loc}(D\times(0, T))$
with respect to Lebesgue measure.
\end{remark}

The assumptions of local boundedness can be weakened, but under the additional assumption
that the elements of the matrix $A$ are Sobolev regular. The next  theorem
was proved in \cite{BDR} in the case $c=0$ under the assumption of
existence of a Lyapunov
function. We
give here a different and shorter proof and do not impose
global restrictions on the
coefficients.

\begin{theorem}\label{th1-2}
Let $p>d+2$. Assume that  for every $k$ we have
  $a^{ij}(\, \cdot \,, t)\in W_{loc}^{p,1}(D_k)$,
$$
\sup_{t\in(0, T)}\|a^{ij}(\,\cdot\,, t)\|_{W^{1, p}(D_k)}<\infty
$$
and $(A(x, t)y, y)\ge m_k|y|^2$ for all $(x, t)\in D_k\times[0, T]$,
$y\in\mathbb{R}^d$ and
some $m_k>0$. Assume also that
$b\in L^{p}(D_k\times [0, T])$ and $c\in L^{p/2}(D_k\times [0, T])$
for each number $k$.
Then  for every probability measure  $\nu$ the set $\mathcal{M}_{\nu}$ is not
empty.
\end{theorem}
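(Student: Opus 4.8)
The plan is to reduce everything to Theorem~\ref{th1-1} by approximating only the lower order coefficients, leaving the diffusion untouched. Since $p>d+2>d$, the Morrey embedding $W^{1,p}(D_k)\hookrightarrow C(\overline{D_k})$ together with $\sup_{t}\|a^{ij}(\cdot,t)\|_{W^{1,p}(D_k)}<\infty$ shows that each $a^{ij}$ is bounded on $D_k\times[0,T]$; combined with $(A(x,t)y,y)\ge m_k|y|^2$ this yields the two-sided ellipticity bound required by Theorem~\ref{th1-1}. Hence the only obstruction is that $b$ and $c$ need not be locally bounded, so I would set $b_n^i=b^i\mathbf{1}_{\{|b|\le n\}}$ and $c_n=\max\{c,-n\}$. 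Then $c_n\le0$, both are bounded on every $D_k\times[0,T]$, one has $\|b_n\|_{L^p(D_k\times[0,T])}\le\|b\|_{L^p(D_k\times[0,T])}$ and $\|c_n\|_{L^{p/2}(D_k\times[0,T])}\le\|c\|_{L^{p/2}(D_k\times[0,T])}$, and $b_n\to b$, $c_n\to c$ in these norms by dominated convergence. Applying Theorem~\ref{th1-1} to the triple $(a^{ij},b_n^i,c_n)$ produces, for each $n$, a solution for these coefficients given (by the Remark) by a density $u_n\ge0$ satisfying $\int_D u_n(\cdot,t)\,dx\le\nu(D)+\int_0^t\!\int_D c_n u_n\,dx\,ds$.

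The heart of the argument is a uniform a priori estimate. Here I would invoke the regularity theory of \cite{BKR}: because $A$ is uniformly nondegenerate and bounded in $W^{1,p}(D_{k+1})$ uniformly in $t$, while $b_n$ and $c_n$ are bounded in $L^p$ and $L^{p/2}$ \emph{uniformly in $n$} (by the truncation bounds above), and $p>d+2$, the densities $u_n$ are bounded, locally uniformly in $(x,t)$ away from $t=0,T$, in a H\"older class $C^{\gamma}(\overline{D_{k}}\times[\delta,T-\delta])$ with a constant that does not depend on $n$. The crucial point is that this estimate depends only on $m_k$, $\sup_t\|a^{ij}(\cdot,t)\|_{W^{1,p}(D_{k+1})}$, $\|b\|_{L^p(D_{k+1}\times[0,T])}$ and $\|c\|_{L^{p/2}(D_{k+1}\times[0,T])}$, and \emph{not} on the supremum norms of $b_n$, $c_n$, which is precisely why truncation does not spoil it. By the Arzel\`a--Ascoli theorem and the diagonal procedure I would then extract a subsequence with $u_n\to u$ locally uniformly on $D\times(0,T)$, the limit $u\ge0$ inheriting the local H\"older bounds.

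It remains to pass to the limit exactly as in Steps~4--5 of Theorem~\ref{th1-1}. Fix $\psi\in C_0^\infty(D)$ with support in some $D_k$. In the weak identity for $u_n$ the only new feature is the convergence of the lower order terms: on the compact set $\mathrm{supp}\,\psi\times[s,t]$ one has $b_n\to b$ in $L^p$ and $c_n\to c$ in $L^{p/2}$, while $u_n\to u$ uniformly, so the products $b_n^i(\partial_{x_i}\psi)u_n$ and $c_n\psi u_n$ converge in $L^1$ to $b^i(\partial_{x_i}\psi)u$ and $c\psi u$, and the second order term is handled by the fixed boundedness of $a^{ij}$ together with the uniform convergence of $u_n$. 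This gives the identity \eqref{ee1} for $u$, so $u$ solves \eqref{e1}. For membership in $\mathcal{M}_\nu$ I would pass to the limit in the mass inequality as in Step~5: taking $\psi_N=1$ on $D_N$ and using uniform convergence on $D_N\times[0,t]$ with $c_n\to c$ in $L^{p/2}(D_N\times[0,T])$ yields $\int_{D_N}\psi_N u(\cdot,t)\,dx-\int_0^t\!\int_{D_N}\psi_N c\,u\,dx\,ds\le\nu(D)$, and letting $N\to\infty$ with Fatou's lemma gives \eqref{con1}.

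I expect the main obstacle to be exactly the uniform a priori estimate of the second paragraph: one needs a density bound whose constant is controlled purely by the $L^p$ and $L^{p/2}$ norms of the lower order coefficients and the Sobolev--ellipticity data of $A$, with no dependence on their supremum norms. This is where the restriction $p>d+2$ is essential in the parabolic scaling, and where the results of \cite{BKR} must be quoted in the correct, norm-explicit form; everything else is a routine approximation and limiting argument parallel to Theorem~\ref{th1-1}.
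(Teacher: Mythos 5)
Your construction of the approximating solutions is legitimate and in fact somewhat cleaner than the paper's: by Morrey's embedding $A$ is indeed locally bounded, so Theorem \ref{th1-1} applies to the triple $(a^{ij},b^i_n,c_n)$ with truncated lower-order terms, the $n$-uniform interior H\"older bounds of \cite[Corollary 3.9]{BKR} depend only on the ellipticity constant, on $\sup_t\|a^{ij}(\cdot,t)\|_{W^{1,p}(D_{k+1})}$ and on the $L^p$, $L^{p/2}$ norms of $b_n$, $c_n$ (which truncation only decreases), and both the interior limit passage and the Fatou argument for \eqref{con1} go through. The paper instead mollifies all coefficients and solves classical Cauchy problems, but up to this point the two routes are essentially parallel.

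The genuine gap is the initial condition $\mu|_{t=0}=\nu$, and this is exactly where the paper's proof departs from that of Theorem \ref{th1-1}. Your claim that one concludes ``exactly as in Steps 4--5 of Theorem \ref{th1-1}'' fails: Step 4 there rests on the estimate $\bigl|\int_D\psi\,u_n(x,s)\,dx-\int_D\psi\,d\nu\bigr|\le C(\psi)s$ with $C(\psi)$ independent of $n$, i.e.\ on the uniform Lipschitz property of $t\mapsto\int_D\psi\,u_n(x,t)\,dx$, which requires $\sup_n\|L_n\psi\|_{L^\infty(D_k\times[0,T])}<\infty$. After truncation $\|b_n\|_{\infty},\|c_n\|_{\infty}\sim n$, so this is precisely what you lose. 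Nor do your H\"older bounds help: \cite[Corollary 3.9]{BKR} is interior in time (on $D_k\times[\delta,T-\delta]$, with constants blowing up as $\delta\to0$), so in the identity $\int_D\psi\,u_n(x,t)\,dx=\int_D\psi\,d\nu+\int_0^t\int_D L_n\psi\,u_n\,dx\,d\tau$ the piece $\int_0^\delta\int_D L_n\psi\,u_n\,dx\,d\tau$ cannot be made small uniformly in $n$: by H\"older's inequality this would require an $n$-uniform bound on $\|u_n\|_{L^{p/(p-2)}(D_k\times[0,\delta])}$, whereas near $t=0$ all you control is the $L^{\infty}([0,T];L^1)$ norm. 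The paper devotes a separate step (Step 3) to exactly this point: it solves the backward adjoint problem $\partial_s w_{n,\tau}+a_n^{ij}\partial_{x_i}\partial_{x_j}w_{n,\tau}+b_n^i\partial_{x_i}w_{n,\tau}+c_nw_{n,\tau}=0$, $w_{n,\tau}|_{s=\tau}=y$, uses the duality identity $\int_D y\,u_n(x,\tau)\,dx=\int_D w_{n,\tau}(x,0)\eta_n(x)\,dx$, and invokes \cite[Part III, Theorem 10.1]{Ladyzh} to obtain $\|w_{n,\tau}(\cdot,0)-y\|_{L^\infty(U)}\le C\tau^{\alpha}$ with $C,\alpha$ independent of $n$, which yields $\int_D y\,u(x,\tau)\,dx\to\int_D y\,d\nu$ as $\tau\to0$. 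Note that the paper keeps all approximating coefficients smooth and globally bounded precisely so that this adjoint problem has a classical solution; in your scheme, where $a^{ij}$ is untouched and $b$, $c$ are merely truncated, even this repair would require an extra smoothing step. Without some such mechanism, your limit $u$ solves the equation on $(0,T)$ but is not shown to attain the initial datum, so membership in $\mathcal{M}_{\nu}$ is not established.
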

\begin{proof}
1. Exactly as at Step 1 of the proof of the
previous theorem, we construct sequences of  smooth bounded functions
$a^{ij}_n$, $b^i_n$ and $c_n$ such that for every $D_k$
one has
$$
\lim_{n\to\infty}\|a^{ij}_n-
a^{ij}\|_{L^{p}(D_k\times [0, T])}=0,
\quad \lim_{n\to\infty}\|b^i_n-
b^i\|_{L^{p}(D_k\times [0, T])}=0,
\quad \lim_{n\to\infty}\|c_n-
c\|_{L^{p/2}(D_k\times [0, T])}=0,
$$
in particular the norms
$\|a^{ij}_n\|_{L^p(D_k\times [0, T])}$, $\|b^i\|_{L^p(D_k\times [0, T])}$,
$\|c_n\|_{L^{p/2}(D_k\times [0, T])}$ are bounded uniformly in $n$.
Moreover, $c_n\le 0$ and $(A_n(x, t)y, y)\ge \min\{m_{k}, 1\}$ for all  $n>k$.
We extend  the measure $\nu$ by zero
outside  $D$ to a measure on $\mathbb{R}^d$. Let  $\eta_{n}\in C_{0}^{\infty}(D)$
be a sequence of
non-negative functions such that
$\eta_{n}\,dx$ are probability measures on $\mathbb{R}^d$
weakly convergent to $\nu$.

Let  $\{u_n\}$ be a smooth
bounded solution of the  Cauchy problem
$$
\partial_{t}u_{n}=\partial_{x_{i}}\partial_{x_{j}}(a_{n}^{ij}u_{n})-
\partial_{x_{i}}(b_{n}^{i}u_{n}) +c_{n}u_{n},\quad u_{n}|_{t=0}=\eta_{n},
$$
and $u_n(x, t)\,dx$ are subprobability measures for every $t$ and
$$
\int_{\mathbb{R}^d}u_n(x, t)\,dx=\int_{\mathbb{R}^d}\eta_n(x)\,dx
+\int_0^t\int_{\mathbb{R}^d}c_n(x, t)u_{n}(x, t)\,dx\,d\tau.
$$

2. We choose a convergent subsequence in  $\{u_{n}\}$.
Due to  \cite[Corollary3.9]{BKR}, for every $k>2$ the following estimate
on the H\"older norm holds:
$$
\|u_{n}\|_{C^{\alpha}(D_k\times [Tk^{-1}, T(1-k^{-1})])}\le C_k,
$$
where
$\alpha\in(0, 1)$ and $C_k$ are independent of  $n$. The  Arzel\`a--Ascoli
theorem, the diagonal method and a passage to a subsequence enable us to conclude that
the sequence $\{u_{n}\}$ converges uniformly to some function $u$ on
$D_k\times[Tk^{-1}, T(1-k^{-1})]$ for every $k$. It is obvious that  $u$ is a
non-negative continuous function. Let us show that  $u$ satisfies (\ref{e1}).

Let  $\psi\in C_{0}^{\infty}(D)$. Then  $\mbox{supp}\,\psi\subset D_k$
for some $k$. The uniform convergence of  $\{u_{n}\}$ immediately yields that
$$
\lim_{n\to\infty}\int_D\psi(x)u_{n}(x,t)dx=\int_D\psi(x)u(x,t)dx
$$
for all $t\in(0, T)$. Now let $0<s<t<T$.
Observe that
\begin{multline*}
\Bigl|\int_s^t\int_{D}L_n\psi u_n\,dx\,d\tau-\int_s^t\int_{D}L\psi
u\,dx\,d\tau\Bigr|\le \\ \le\|L_n\psi-L\psi\|_{L^1(D_k\times[s,
t])}\|u_n\|_{L^{\infty}(D_k\times[s, t])} +\|L\psi\|_{L^1(D_k\times[s, t])}\|u-
u_n\|_{L^{\infty}(D_k\times[s, t])},
\end{multline*}
where the first summand in
right-hand side tends to zero by convergence of $a^{ij}_n$, $b^i_n$ and
$c_n$ to  $a^{ij}$, $b^i$ and $c$, respectively. The uniform convergence of $\{u_n\}$
to  $u$ yields convergence of the second summand to zero. Hence,
$$
\lim_{n\to\infty}\int_s^t\int_{D}L_n\psi u_n\,dx\,d\tau=\int_s^t\int_{D}L\psi
u\,dx\,d\tau.
$$
Thus, letting  $n\to\infty$, we obtain
$$
\int_D\psi(x)u(x, t)=\int_D\psi(x)u(x, s)\,dx+\int_s^t\int_DL\psi u\,dx\,d\tau
$$
for all  $s, t\in (0, T)$.

3. Let us justify the limit as  $s\to 0$.

Let  $0<\tau<T$ and $y\in C_{0}^{\infty}(D)$. We extend the function  $y$
by zero outside $D$. Let also $w_{n,\tau}$ be a solution of the adjoint
problem
$$
\partial_{t}w_{n,\tau}+a_{n}^{ij}\partial_{x_{i}}\partial_{x_{j}}w_{n,\tau}
+b_{n}^{i}\partial_{x_{i}}w_{n,\tau}+c_{n}w_{n,\tau}=0,\quad
w_{n,\tau}|_{t=\tau}=y.
$$
Let $\zeta\in C_{0}^{\infty}\left(\mathbb{R}\right)$
be such that  $\zeta(x)=1$ if $|x|\le1$, $\zeta(x)=0$ if $|x|>2$, $|\zeta|\leq1$
and let $\zeta$ have two bounded derivatives. Multiplying the adjoint equation
by $\zeta_Nu_n$ and integrating by parts and then letting $N$ to infinity, we
obtain
$$
\int_{D} y(x)u_{n}(x,\tau)\, dx=\int_{D}w_{n,\tau}(x,0)\eta_{n}(x)\,dx,
$$
in the latter equality we have taken into account that
$\mbox{supp}\,y,\quad \mbox{supp}\,\eta_n  \subset D$.
By \cite[Part III, Theorem 10.1]{Ladyzh}, for
every ball  $U\subset\mathbb{R}^d$ the estimate
$$
\|w_{n,\tau}(x,0)-y(x)\|_{L^{\infty}(U)}\le C(U)\tau^{\alpha}
$$
holds with  $C$ and $\alpha$ independent of $n$.
Thus,
\begin{multline*}
\Bigl|\int_{D}y(x)u_n(x, \tau)\,dx-
\int_{D}y\,d\nu\Bigr|\le \\
\le\int_{D}|w_{n, \tau}(x, 0)-y(x)|\eta_n(x)\,dx
+\Bigl|\int_{D}y(x)\eta_n(x)\,dx- \int_{D}y\,d\nu\Bigl|\le \\ \le
C\tau^{\alpha}+\Bigl|\int_{D}y(x)\eta_n(x)\,dx- \int_{D}y\,d\nu\Bigl|.
\end{multline*}
Letting  $n\to\infty$, we obtain the estimate
$$
\Bigl|\int_{D}y(x)u(x, \tau)\,dx-\int_{D}y\,d\nu\Bigr|\le C\tau^{\alpha},
$$
which yields that
$$
\lim_{\tau\to 0}\int_{D}y(x)u(x, \tau)\,dx=\int_{D}y\,d\nu.
$$
Finally, the inequality
$$
\int_{D} u(x,t)\, dx-
\int_{0}^{t}\int_{D} c(x,s)u(x,s)\, dx\, ds\le1=\nu(D)
$$
can be justified exactly in the same way at Step 5 of the proof of Theorem \ref{th1-1}.
\end{proof}

\begin{remark}\rm
By \cite[Corollary  3.9]{BKR}, under the assumptions of Theorem \ref{th1-2}
every solution  $\mu$ in $\mathcal{M}_{\nu}$ is given by a locally H\"older continuous density  $\varrho$
with respect to Lebesgue measure.
\end{remark}

Now we proceed to the case of a degenerate matrix $A$.

\begin{theorem} \label{th1-3}
Let the coefficients  $a^{ij},\, b^{i}$ and $c$ be continuous in $x$, measurable in $t$
and bounded on  $D_k\times[0, T]$ for every $k$.
Assume that the diffusion matrix~$A$ is symmetric and
$(A(x, t)y, y)~\ge 0$ for all $(x, t)\in D\times[0, T]$ and $y\in\mathbb{R}^d$.
Then, for every probability measure $\nu$, the set  $\mathcal{M}_{\nu}$ is not empty.
\end{theorem}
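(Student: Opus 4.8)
The plan is to reduce to the nondegenerate situation already settled in Theorem \ref{th1-1} and then pass to the limit at the level of measures. First I would regularize the \emph{degeneracy} rather than the coefficients: set $A_\delta = A + \delta I$ for $\delta>0$, keeping $b$ and $c$ unchanged. Since $A$ is bounded on each $D_k\times[0,T]$, the matrix $A_\delta$ satisfies $\delta|y|^2\le (A_\delta(x,t)y,y)\le (M_k+\delta)|y|^2$ there, so the hypotheses of Theorem \ref{th1-1} hold for the operator $L_\delta\varphi=a^{ij}_\delta\partial_{x_i}\partial_{x_j}\varphi+b^i\partial_{x_i}\varphi+c\varphi$ (note that continuity in $x$ and measurability in $t$ are not needed for that theorem, only boundedness and uniform ellipticity). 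Hence for every $\delta$ there is a solution $\mu^\delta\in\mathcal{M}_\nu$ of the $A_\delta$-problem. I would then fix a sequence $\delta_n\to 0$, write $\mu^n:=\mu^{\delta_n}$, and aim to extract a limit solving the original degenerate problem.

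For compactness I would not use the regularity estimates of \cite{BKR} (their constants blow up as $\delta\to 0$, and the limit need not be absolutely continuous), but rather weak compactness of measures. Fix $\psi\in C_0^\infty(D)$ with $\mathrm{supp}\,\psi\subset D_k$. Because $a^{ij}$, $b^i$, $c$ are bounded on $D_k\times[0,T]$ and $\delta_n\le\delta_1$, the integrand $L_{\delta_n}\psi$ is bounded uniformly in $n$ on $\mathrm{supp}\,\psi$, while $\mu^n_s(D)\le 1$. Consequently the right-hand side of \eqref{ee1} is Lipschitz in $t$ with a constant $C(\psi)$ independent of $n$, so after passing to the canonical continuous representatives the functions $t\mapsto\int_D\psi\,d\mu^n_t$ are uniformly bounded and equi-Lipschitz. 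Applying the Arzel\`a--Ascoli theorem along a countable sup-norm dense subset of $C_0(D)$ and diagonalizing, I would obtain a subsequence for which $t\mapsto\int_D\psi\,d\mu^n_t$ converges uniformly on $[0,T]$ for every $\psi\in C_0(D)$. For each fixed $t$ the limit defines a positive linear functional of norm $\le 1$, hence a subprobability measure $\mu_t$, and $\mu^n_t\to\mu_t$ vaguely for each individual $t$, with $t\mapsto\int_D\psi\,d\mu_t$ continuous.

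It remains to pass to the limit in \eqref{ee1}. Writing $L_{\delta_n}\psi=L\psi+\delta_n\Delta\psi$, the perturbation contributes at most $\delta_n\|\Delta\psi\|_\infty t\to 0$. For the main term I would argue at fixed time: for each $s$ the function $L\psi(\,\cdot\,,s)$ lies in $C_0(D)$ (here continuity of the coefficients in $x$ is essential), so vague convergence gives $\int_D L\psi(\,\cdot\,,s)\,d\mu^n_s\to\int_D L\psi(\,\cdot\,,s)\,d\mu_s$; since these integrals are bounded by $C(\psi)$, dominated convergence yields convergence of the $s$-integral, circumventing the mere measurability in $t$. This produces $\int_D\psi\,d\mu_t=\int_D\psi\,d\nu+\int_0^t\int_D L\psi\,d\mu_s\,ds$, so $\mu$ solves \eqref{e1}; continuity of $t\mapsto\int_D\psi\,d\mu_t$ together with the vanishing of the integral term as $t\to 0$ gives the initial condition. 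Finally, the mass inequality \eqref{con1} is obtained exactly as in Step~5 of Theorem \ref{th1-1}: testing the bound $\mu^n_t(D)\le\nu(D)+\int_0^t\int_D c\,d\mu^n_s\,ds$ against cut-offs $\psi_N\nearrow 1$, passing to the limit in $n$ (vague convergence plus dominated convergence, using $c\le 0$), and then letting $N\to\infty$ by monotone convergence.

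I expect the main obstacle to be precisely this passage to the limit of the measurable-in-$t$, degenerate coefficients paired against a merely weakly convergent sequence of measures: without a density or any equicontinuity in the space variable one cannot upgrade vague convergence, so the argument must be organized so that (i) the limit measures $\mu_t$ exist \emph{pointwise} in $t$ and (ii) the coefficients enter only through the continuous-in-$x$ functions $L\psi(\,\cdot\,,s)$. These two features are exactly what makes the dominated-convergence step legitimate, and they explain why continuity in $x$ is assumed here in place of the ellipticity used in Theorems \ref{th1-1}--\ref{th1-2}.
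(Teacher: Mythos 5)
Your proposal is correct and follows essentially the same route as the paper: adding $\delta I$ to the diffusion matrix is exactly the paper's vanishing-viscosity operator $L_\varepsilon=\varepsilon\Delta+L$, the compactness step rests on the same uniform-in-$n$ equi-Lipschitz bounds for $t\mapsto\int_D\psi\,d\mu^n_t$ yielding pointwise-in-$t$ vague limits (the paper organizes this via Prokhorov over a dense set of times rather than Arzel\`a--Ascoli over a dense family of test functions, a purely technical variant), and the equation and mass inequality are recovered in the limit as in Step 5 of Theorem \ref{th1-1}. Your explicit dominated-convergence argument at fixed times, exploiting that $L\psi(\,\cdot\,,s)\in C_0(D)$, is a slightly more careful rendering of the paper's appeal to weak convergence of the space-time measures, which is where the continuity in $x$ and mere measurability in $t$ enter in both proofs.
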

\begin{proof}
We use the well-known method of  "vanishing viscosity".

1. Introduce the operator
$
L_{\varepsilon}:=\varepsilon\Delta+L
$
for every $\varepsilon>0$ and consider the Cauchy problem
\begin{equation}\label{eq:visc-1}
\partial_{t}\mu_{t}=L_{\varepsilon}^{*}\mu, \quad \mu|_{t=0}=\nu.
\end{equation}
Obviously, the hypotheses of Theorem \ref{th1-1} are fulfilled.
Then, for every $n$, the problem (\ref{eq:visc-1})
with $\varepsilon=1/n$ has a solution  $\mu^n$ given by a flow of
 subprobability measures $(\mu_{t}^{n})_{t\in(0,T)}$ on $D$
for which the following  inequality holds:
$$
\mu_t^n(D)\le \nu(D)+\int_0^t\int_{D}c(x, s)\mu_s^n(dx)\,ds.
$$

2. Choosing a convergent subsequence of solutions.

There exists a subsequence of indices $n_{l}$ such that the measures
 $\mu_{t}^{n_{l}}$
converge weakly  on each compact set  $\overline{D_k}$  for each  $t\in[0,T]$.
To prove this, it suffices to apply Prokhorov's theorem for any fixed compact set for a dense set
in $S\in[0,T]$ 
and the diagonal method, and then, using again the diagonal
method for compact sets $D_k$, we can extract a subsequence of
measures that converges on all compact sets.
Let us show that the constructed subsequence is a Cauchy sequence for every  $t\in[0,T]$. Let
$t\in[0,T]$, $s\in S$ and $\varphi\in C_{0}^{\infty}(\mathbb{R}^{d})$.
By the boundedness of the coefficients of $L$ on cylinders, one has
\begin{multline*}
\left|\int_D\varphi d\mu_{t}^{n_{p}}-\int_D\varphi d\mu_{t}^{n_{k}}\right|\le
\left|\int_D\varphi d\mu_{t}^{n_{p}}-\int_D\varphi d\mu_{s}^{n_{p}}\right|+\\
+\left|\int_D\varphi d\mu_{s}^{n_{p}}-\int_D\varphi d\mu_{s}^{n_{k}}\right|+
\left|\int_D\varphi d\mu_{s}^{n_{k}}-\int_D\varphi d\mu_{t}^{n_{k}}\right|\le\\
\le 2C(\varphi)\cdot|t-s|+\left|\int_D\varphi d\mu_{s}^{n_{p}}-\int_D\varphi d\mu_{s}^{n_{k}}\right|.
\end{multline*}
Given $\varepsilon>0$, we can choose $s$ close enough to $t$ to make the first summand
 less than $\varepsilon/2$.
Since the sequence $\mu_{s}^{n_{l}}$ converges, it is a Cauchy sequence,
thus there exists  a number  $N$ such that for all
 $p,k>N$ the second summand is less than  $\varepsilon/2$. Therefore, it is proved
 that for every function $\varphi\in C_{0}^{\infty}(D)$
the sequence of integrals
${\displaystyle \int_D\varphi\, d\mu_{t}^{n_{l}}}$ is a Cauchy sequence.
Fix a number $k$. Since every  function $f$ continuous on a compact set  $\overline{D_k}$
can be uniformly approximated by functions in $C^{\infty}_0(D)$ on $\overline{D_k}$,
the sequence of integrals
${\displaystyle \int_{D_k} f\, d\mu_{t}^{n_{l}}}$
is a Cauchy sequence. Hence for every $t$
the sequence $\mu_{t}^{n_l}$ converges weakly to some subprobability measure $\mu_t$
on each $D_k$.
We observe that for every continuous function
$f$ the mapping ${\displaystyle t\mapsto\int_{D_k} f\, d\mu_{t}}$ is Borel
 measurable on $[0,T]$ as a limit of measurable functions. Consider the class
$\Phi$ of bounded Borel functions $\varphi$ on $D_k$,
for which the mapping  ${\displaystyle t\mapsto\int_{D_k}\varphi\, d\mu_{t}}$
is Borel measurable on $[0,\tau]$. The set $\Phi$ contains the algebra of continuous bounded
functions on  $D_k$ and is closed with respect to uniform and monotone limits.
By the monotone class theorem (see \cite[Theorem 2.2.12]{BS}) the set  $\Phi$
contains all bounded Borel functions on~$D_k$. In particular, the mapping
$t\mapsto\mu_{t}(B)$ is Borel measurable on $[0,T]$ for each Borel set
 $B\subset D_k$. Since $k$ was arbitrary, this is true for every
Borel subset of $D$. Let $\mu$ be the measure given by a family $(\mu_{t})_{t\in[0,T]}$.
Obviously,  $\mu^{n_{l}}$ converges weakly  to $\mu$ on  $D_k\times[0, T]$ for every $k$.

3. Passing to the subsequence constructed above, we conclude that
 the sequence  $\mu^n=(\mu_t^n)_{t\in(0, t)}$ is such that
$\mu^n$  converges weakly to  $\mu=(\mu_t)_{t\in(0, T)}$ and
$\mu_t^n$  converge weakly to $\mu_t$ for each $t$.

Let $\varphi\in C_{0}^{\infty}(D)$.
For every $n$ one has
\begin{equation}\label{eq:vjaz-1}
\int_{D}\varphi d\mu_{t}^{n}-
\int_{D}\varphi d\mu_{0}=
\int_{0}^{t}\int_{D}L_{1/n}\varphi d\mu_{s}^{n}ds.
\end{equation}
Since
$$
\Bigl|\int_{0}^{t}\int_{D}\left(L_{1/n}-L\right)\varphi d\mu_{s}^{n}ds\Bigr|\le
\frac{T}{n}\sup_{D}\left|\Delta\varphi\right|,
$$
by  the weak convergence of $\mu^{n}$ to $\mu$ we conclude that
$$
\lim\limits _{n\rightarrow\infty}\int_{0}^{t}\int_{D}L_{1/n}\varphi d\mu_{s}^{n}ds=\int_{0}^{t}\int_{D}L\varphi d\mu_{s}ds,
$$
and also for every  $t$
$$
\lim\limits _{n\rightarrow\infty}\int_{D}\varphi d\mu_{t}^{n}=\int_{D}\varphi d\mu_{t}.
$$
Hence one can let  $n\to\infty$ in (\ref{eq:vjaz-1}) and obtain
$$
\int_D\varphi d\mu_{t}-\int_D\varphi d\mu_{0}=\int_{0}^{t}\int_D L\varphi d\mu_{s}ds.
$$
This means that  $\mu$ is the required solution to the Cauchy problem (\ref{e1}).
Finally, the inequality
$$
\int_{D} u(x,t)\, dx-\int_{0}^{t}\int_{D} c(x,s)u(x,s)\, dx\, ds\le1=\nu(D)
$$
is justified as at Step 5 of the proof of  Theorem \ref{th1-1}.
\end{proof}

\begin{remark}\label{rm1}\rm
If we assume that the coefficients and the initial data are more regular,
then one can construct a solution
given by a density even in the case of a  degenerate diffusion matrix.
Such results were obtained for $D=\mathbb{R}^d$ in  \cite{DL, Figalli, LeBrL}.
These results can be extended to the case of arbitrary domains $D$ as follows.
Let  $q\ge 1$. Suppose that for every $k$ the following assumptions are fulfilled:
$b, c\in L^{q}(D_k\times[0, T])$,
$a^{ij}(\, \cdot\,,t)\in W^{1, q}(D_k)$ and
$$
\sup_{t\in[0, T]}\|a^{ij}\|_{W^{1, q}}(D_k)<\infty.
$$
Let $p=(q-1)/q$. Suppose that $(pc+(p-1){\rm div} h)^{+}\in L^1([0, T], L^{\infty}(D))$, where
$
h^i=\partial_{x_j}a^{ij}-b^i.
$
Let also $\varrho_0\in L^p(D)$ and $\nu=\varrho_0(x)\,dx$.
Then, there exists a solution $\mu\in \mathcal{M}_{\nu}$
given by a density $\varrho\in L^{\infty}([0, T], L^p(D))$.

The proof repeats practically verbatim the reasoning from  \cite{DL} and \cite{LeBrL}.
We assume that the coefficients are locally smooth (for that if suffices to convolute
the equation with a smooth kernel). Thus, the only problem is the
 unboundedness of the coefficients on  $D$ and the degenerate diffusion matrix.
  Let $\psi_k\in C^{\infty}_0(D)$ and let $\psi_k(x)=1$ if $x\in D_k$.
Let $p>1$. Consider the Cauchy problem $\partial_tu_k=L_k^{*}u_k$, $u_k|_{t=0}=u_0$, where
$$
L_k=(\psi_ka^{ij}+k^{-1}\delta^{ij})\partial_{x_i}\partial_{x_j}+\psi_kb^i\partial_{x_i}+
p^{-1}(p-1)\bigl((2\partial_{x_j}a^{ij}-b^i)\partial_{x_i}\psi_k+a^{ij}\partial_{x_i}\partial_{x_j}\psi_k\bigr)
+\psi_kc.
$$
Observe that there exists  a solution $\{u_k\}$ of this Cauchy problem which is a smooth function
and $u_k\in L^{\infty}([0, T], L^1(D))$.
Then the inequality
$$
\partial_t |u_k|^p\le
\partial_{x_i}\partial_{x_j}((\psi_ka^{ij}+k^{-1}\delta^{ij})|u_k|^p)-
\partial_{x_i}(\psi_kb|u_k|^p)+\psi_k(pc+(p-1){\rm div} h)|u_k|^p
$$
and the Grownwall inequality immediately yield that
$$
\sup_{t\in[0, T]}\int_{D}|u_k(x, t)|^p\,dx\le M
$$
with a constant $M$ independent of $k$. Further in the standard way
one can extract a subsequence from $u_k$ which converges to a solution $\varrho$
 of class $L^{\infty}([0, T], L^p(D))$.
\end{remark}

To summarize this section, we find out when the \emph{identity}
\begin{equation}\label{main-e}
\mu_t(D)=\nu(D)+\int_0^t\int_{\mathbb{R}^d}c(x, s)\,d\mu_s\,ds
\end{equation}
holds instead of an inequality (in particular, this means that if  $c=0$ and $\nu$ is a probability measure,
 then $\mu_t$ are also probability measures).

\begin{theorem}\label{th-est}
Let $\mu=(\mu_t)_{0<t<T}\in \mathcal{M}_{\nu}$ and $c\le 0$.
Suppose that there exists a function  $V$ such that
$V\in C^{2, 1}(D\times(0, T))\bigcap C(D\times[0, T))$, for every interval
 $[\alpha, \beta]\in(0, T)$
$$
\lim_{k\to\infty}\inf_{D_k\setminus D_{k-1}\times[\alpha, \beta]}V(x, t)=+\infty
$$
and for some functions $K, H\in L^1((0, T))$ with $H\ge 0$, the following estimate
holds:
$$
\partial_tV(x, t)+LV(x, t)\le K(t)+H(t)V(x, t).
$$
Suppose also that  $V(\, \cdot \,,0)\in L^1(\nu)$.
Then for a.e.  $t\in(0, T)$
$$
\mu_t(D)=\nu(D)+\int_0^t\int_{D}c(x, s)\,d\mu_s\,ds
$$
and the estimate
$$
\int_{D}V(x, t)\,d\mu_t\le Q(t)+R(t)\int_{D}V(x, 0)\,d\nu
$$
holds, where
$$
R(t)=\exp\Bigl(\int_0^tH(s)\,ds\Bigr), \quad Q(t)=R(t)\int_0^t\frac{K(s)}{R(s)}\,ds.
$$
\end{theorem}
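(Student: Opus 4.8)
The plan is to test the equation against the (unbounded, time-dependent, merely $C^{2,1}$) Lyapunov function $V$; the whole difficulty is to make this rigorous without a global bound on the coefficients and without assuming $V$ globally Lipschitz. First I would record the time-dependent form of \eqref{ee1}: mollifying in $t$ and using the equivalence of the two notions of solution recalled above, for every $\varphi\in C^{2,1}(D\times(0,T))$ with compact support in $x$ one has, for a.e. $t$,
$$\int_D\varphi(\cdot,t)\,d\mu_t-\int_D\varphi(\cdot,0)\,d\nu=\int_0^t\int_D\bigl(\partial_s\varphi+L\varphi\bigr)\,d\mu_s\,ds,$$
the integrand being $\mu$-integrable because on the compact $x$-support the derivatives of $\varphi$ are bounded while $a^{ij},b^i,c\in L^1(\overline{D_k}\times J,\mu)$. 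Replacing $V$ by $V+c_0$ (which, since $c\le0$, satisfies the same differential inequality) I may assume $V\ge0$.

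Next I introduce concave truncations: let $\Phi_m\in C^\infty(\mathbb R)$ be increasing and concave with $\Phi_m(0)=0$, $\Phi_m(r)=r$ for $r\le m$ and $\Phi_m\equiv\Phi_m(2m)=:c_m$ for $r\ge 2m$, so that $0\le\Phi_m(V)\uparrow V$, $\Phi_m'(V)\uparrow1$, $c_m\to+\infty$. By the chain rule and the symmetry and nonnegativity of $A$,
$$(\partial_s+L)\Phi_m(V)=\Phi_m'(V)\bigl(\partial_sV+LV\bigr)+\Phi_m''(V)(A\nabla V,\nabla V)+c\bigl(\Phi_m(V)-\Phi_m'(V)V\bigr).$$
The Hessian term is $\le0$ since $\Phi_m''\le0$ and $(A\nabla V,\nabla V)\ge0$; the last term is $\le0$ since $c\le0$ and concavity with $\Phi_m(0)=0$ gives $\Phi_m(V)-\Phi_m'(V)V\ge0$. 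Together with $\partial_sV+LV\le K+HV$ this yields the pointwise bound $(\partial_s+L)\Phi_m(V)\le\Phi_m'(V)(K+HV)\le|K|+HV$.

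The crucial step is to remove the spatial cutoff. Testing the displayed identity with $\psi_k\Phi_m(V)$, where $\psi_k\in C_0^\infty(D)$, $0\le\psi_k\le1$, $\psi_k=1$ on $D_k$, and expanding $L(\psi_k\Phi_m(V))$ produces error terms supported on $\mathrm{supp}\,\nabla\psi_k\subset D_{k+1}\setminus D_k$; this is exactly where global Lipschitzness of $V$ was previously needed. I avoid it by using the coercivity of $V$: for $k$ large (depending on $m$) the assumption $\inf_{(D_k\setminus D_{k-1})\times[\alpha,\beta]}V\to+\infty$ forces $V>2m$ on that annulus, so there $\Phi_m(V)\equiv c_m$ and $\nabla\Phi_m(V)=0$. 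Hence the gradient cross term vanishes and the residual cutoff error collapses to $c_m\int_0^t\!\int_D(a^{ij}\partial_{x_i}\partial_{x_j}\psi_k+b^i\partial_{x_i}\psi_k)\,d\mu_s\,ds=c_m\bigl[\int_D\psi_k\,d\mu_t-\int_D\psi_k\,d\nu-\int_0^t\!\int_D c\psi_k\,d\mu_s\,ds\bigr]$, using \eqref{ee1} with test function $\psi_k$. Letting $k\to\infty$ (monotone/dominated convergence, $\psi_k\uparrow1$) gives, with $\delta_t:=\mu_t(D)-\nu(D)-\int_0^t\int_D c\,d\mu_s\,ds$,
$$\int_D\Phi_m(V)(\cdot,t)\,d\mu_t-\int_D\Phi_m(V)(\cdot,0)\,d\nu=\int_0^t\int_D(\partial_s+L)\Phi_m(V)\,d\mu_s\,ds+c_m\,\delta_t.\qquad(\star)$$

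Both conclusions now drop out of $(\star)$ (on a common full-measure set of $t$, by countability of the pairs $(k,m)$). For the estimate, note $c_m\delta_t\le0$ since $c_m\ge0$ and $\delta_t\le0$ by \eqref{con1}; dropping it, inserting the pointwise bound above and letting $m\to\infty$ (monotone convergence for the $HV$ and $V$ terms, dominated convergence for the $K$ term, using $\int_D V(\cdot,0)\,d\nu<\infty$), I obtain, writing $\theta(t)=\int_D V\,d\mu_t$ and using $0\le\mu_s(D)\le1$,
$$\theta(t)\le\theta(0)+\int_0^t\bigl(K(s)+H(s)\theta(s)\bigr)\,ds,$$
whence Gronwall's inequality gives $\theta(t)\le Q(t)+R(t)\theta(0)$. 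For the identity I argue by contradiction from $(\star)$: its right-hand side minus $c_m\delta_t$ is bounded below uniformly in $m$, because $\int_D\Phi_m(V)(\cdot,t)\,d\mu_t\ge0$, $\int_D\Phi_m(V)(\cdot,0)\,d\nu\le\int_D V(\cdot,0)\,d\nu$, and the integrand is $\le|K|+HV\in L^1$. Thus $c_m\delta_t$ is bounded below uniformly in $m$; since $c_m\to+\infty$ this forces $\delta_t\ge0$, which with $\delta_t\le0$ gives $\delta_t=0$, i.e. the required identity. The main obstacle is the cutoff step, and it is precisely the interplay of concavity (to kill the second-order term) with the coercivity of $V$ (to make $\Phi_m(V)$ locally constant on the cutoff annulus) that lets one dispense with the global Lipschitz hypothesis.
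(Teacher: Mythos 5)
Your strategy is essentially the paper's own: a concave truncation of $V$ whose constant value ``at infinity'' couples, through the equation for the total mass, to the defect $\delta_t=\mu_t(D)-\nu(D)-\int_0^t\int_D c\,d\mu_s\,ds$; Gronwall then gives the moment estimate, and letting the truncation constant tend to $+\infty$ forces $\delta_t=0$. (The paper tests directly with $\zeta_N(V)-N$, which is compactly supported in $x$ precisely because $\zeta_N(V)\equiv N$ near $\partial D$, so no cutoff $\psi_k$ is needed; your identity $(\star)$ with the term $c_m\delta_t$ is exactly the paper's identity with the term $N\delta_t$.) However, one step fails as written: for the estimate you let $m\to\infty$ \emph{first} and then apply Gronwall to $\theta(t)=\int_D V\,d\mu_t$. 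Nothing guarantees a priori that $\theta$ is finite, nor that $\int_0^t H(s)\theta(s)\,ds<\infty$; if these quantities are infinite, the inequality $\theta(t)\le\theta(0)+\int_0^t\bigl(K(s)+H(s)\theta(s)\bigr)ds$ is vacuous and Gronwall yields nothing. This is not pedantry: finiteness of $\int_D V\,d\mu_t$ is part of the conclusion, so assuming it is circular, and the same unproved claim reappears in your proof of the identity, where you assert that the integrand is dominated by $|K|+HV\in L^1(\mu_s\,ds)$. The repair is immediate and is precisely the paper's ordering: bound $\Phi_m'(V)(K+HV)\le |K|+H\Phi_m(V)$ (using the same concavity inequality $V\Phi_m'(V)\le\Phi_m(V)$ that you already invoked for the $c$-term), note that $g_m(t)=\int_D\Phi_m(V)\,d\mu_t\le c_m<\infty$, apply Gronwall to $g_m$ at fixed $m$, and only then let $m\to\infty$ by monotone convergence.

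A second, lesser issue: the coercivity hypothesis is uniform only on intervals $[\alpha,\beta]\subset(0,T)$ compactly contained in $(0,T)$, so for fixed $k$ you cannot assert $V>2m$ on $\mathrm{supp}\,\nabla\psi_k$ for \emph{all} $s\in(0,t)$; near $s=0$ the sublevel set $\{V\le 2m\}$ need not stay inside a fixed compact, and then the collapse of the cutoff error to $c_m L_0\psi_k$ breaks down. Hence $(\star)$ should first be derived on $[s_0,t]$ with $0<s_0<t$, where the collapse is legitimate for $k$ large depending on $(m,s_0,t)$, and then $s_0\to0$ taken using the initial condition; this is what the paper's device of multiplying by $\eta\in C^{\infty}_0((0,T))$ and then ``letting $s\to0$'' accomplishes. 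With these two corrections your argument coincides with the paper's proof.
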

\begin{proof}
Let $\zeta_N\in C^2([0, +\infty))$ be such that  $0\le \zeta'\le 1$, $\zeta''\le 0$,
where $\zeta_N(s)=s$ if $s\le N-1$ and $\zeta(s)=N$ if $s>N+1$.
Let also $\eta\in C^{\infty}_0((0, T))$.
For the function $u(x, t)=(\zeta_{N}(V(x, t))-N)\eta(t)$ and the solution
 $\mu=(\mu_t)_{0<t<T}\in\mathcal{M}_{\nu}$
we have
$$
\int_0^T\int_D[\partial_tu+Lu]\,d\mu_t\,dt=0,
$$
which yields that
$$
-\int_0^T\eta'(t)\int_{D}(\zeta_{N}(V(x, t))-N)\,d\mu_t\,dt=\int_0^T\eta(t)\int_{D}L(\zeta_{N}(V(x, t))-N)\,d\mu_t\,dt.
$$
Since $\eta$ is arbitrary,
$$
\frac{d}{dt}\int_D(\zeta_{N}(V(x, t))-N)\,d\mu_t=\int_{D}L(\zeta_{N}(V(x, t))-N)\,d\mu_t.
$$
Hence there holds the identity
\begin{multline*}
\int_{D}\zeta_{N}(V(x, t))\,d\mu_t=\int_{D}\zeta_{N}(V(x, s))\,d\mu_s+
\\
\left(\mu_t(D)-\nu(D)-\int_0^t\int_{D}c(x, \tau)\,d\mu_{\tau}\,d\tau\right)N+
\\
+\int_s^t\int_{D}\Bigl(\zeta_N'(V)(\partial_tV+LV)+\zeta_N''(V)|\sqrt{A}\nabla V|^2\Bigr)\,d\mu_{\tau}\,d\tau+
\\
+\int_s^t\int_{D}c\left(\zeta_N(V)-\zeta'_N(V)V\right)\,d\mu_{\tau}\,d\tau.
\end{multline*}
Observing that $z\zeta'_N(z)\le \zeta_N(z)$, we arrive at
\begin{multline*}
\int_{D}\zeta_N(V(x, t))\,d\mu_t\le \int_{D}\zeta_N(V(x, s))\,d\mu_s+
\\
\left(\mu_t(D)-\nu(D)-\int_0^t\int_{D}c(x, \tau)\,d\mu_{\tau}\,d\tau\right)N+
\\
\int_s^t K(\tau)+H(\tau)\int_{D}\zeta_N(V(x, \tau))\,d\mu_{\tau}\,d\tau,
\end{multline*}
Letting $s\to 0$, we obtain
\begin{multline}\label{in1}
\int_{D}\zeta_N(V(x, t))\,d\mu_t\le \int_{D}\zeta_N(V(x, 0))\,d\nu+
\\
\left(\mu_t(D)-\nu(D)-\int_0^t\int_{D}c(x, \tau)\,d\mu_{\tau}\,d\tau\right)N+
\\
\int_0^tK(\tau)+H(\tau)\int_{D}\zeta_N(V(x, \tau))\,d\mu_{\tau}\,d\tau.
\end{multline}
Since
$$
\mu_t(D)\le \nu(D)+\int_0^t\int_{D}c(x, s)\,d\mu_s\,ds,
$$
the last inequality can be rewritten in the following way:
$$
\int_{D}\zeta_N(V(x, t))\,d\mu_t\le \int_{D}\zeta_N(V(x, 0))\,d\nu+
\int_0^tK(\tau)+H(\tau)\int_{D}\zeta_N(V(x, \tau))\,d\mu_{\tau}\,d\tau.
$$
Using Grownwall's inequality, we have
$$
\int_{D}\zeta_N(V(x, t))\,d\mu_t\le Q(t)+R(t)\int_{D}\zeta_N(V(x, 0))\,d\nu.
$$
Letting $N\to\infty$, we obtain the required estimate.

Moreover, if
$\mu_t(D)<\nu(D)+\displaystyle\int_0^t\int_{D}c(x, s)\,d\mu_s\,ds$,
then, letting  $N\to\infty$ in (\ref{in1}),
we obtain
$$
\int_{D}V(x, t)\,d\mu_t-\int_{D}V(x, 0)\,d\nu-
\int_0^tK(\tau)+H(\tau)\int_{D}V(x, \tau)\,d\mu_{\tau}\,d\tau=-\infty,
$$
which is impossible. Hence
$\mu_t(D)=\nu(D)+\displaystyle\int_0^t\int_{D}c(x, s)\,d\mu_s\,ds$.
The theorem is proved.
\end{proof}

Thus, in the case $c=0$ to construct a probability solution it suffices to
construct a subprobability solution (for that only a local regularity
of the coefficients is needed). This solution is automatically a probability solution if there is
a  Lyapunov function.

Observe that the assumption $V(\,\cdot\,,0)\in L^{1}(\nu)$ is not restrictive:
if there is \emph{some} Lyapunov function, then there is a Lyapunov function
 integrable with respect to the initial condition.
More precisely, the following generalization of a lemma from \cite{BDR} is true.

\begin{proposition}\label{lem2-Lyap_f}
Let $\mu=\left(\mu_{t}\right)_{t\in[0,T]}$
be a solution to the  Cauchy problem $\partial_{t}\mu=L^{*}\mu$ and $\mu|_{t=0}=\nu$,
where $\nu$ is a probability measure on $D$. Suppose that there exists  a non-negative
function $V$ such that
$V\in C^{2, 1}(D\times(0, T))\bigcap C(D\times[0, T))$,
for every interval  $[\alpha, \beta]\in(0, T)$
$$
\lim_{k\to\infty}\inf_{D_k\setminus D_{k-1}\times[\alpha, \beta]}V(x, t)=+\infty
$$
and for some functions $K, H\in L^1((0, T))$ with $H\ge 0$, one has
$$
\partial_tV(x, t)+LV(x, t)\le K(t)+H(t)V(x, t).
$$
Then there exists a non-negative function $W\in C^{2, 1}(D\times(0, T))\bigcap C(D\times[0, T))$
such that for every interval $[\alpha, \beta]\in(0, T)$
$$
\lim_{k\to\infty}\inf_{D_k\setminus D_{k-1}\times[\alpha, \beta]}W(x, t)=+\infty,
$$
the following inequality holds:
$\partial_tW(x, t)+LW(x, t)\le K(t)+H(t)$ holds and there is an inclusion $W(x, 0)\in L^1(\nu)$.
\end{proposition}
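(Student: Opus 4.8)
The plan is to look for $W$ in the form $W=\phi(V)$ for a suitable increasing, concave $\phi\in C^{2}([0,\infty))$, so that the boundary blow-up and regularity of $W$ are inherited from those of $V$. First I would note that we may assume $K\ge 0$: replacing $K$ by $K^{+}$ only enlarges the right-hand side of the hypothesis, so $\partial_{t}V+LV\le K+HV$ still holds with $K\in L^{1}$, $K\ge 0$, and the conclusion is then proved with this $K$. The chain rule gives the pointwise identity
\begin{equation*}
\partial_{t}W+LW=\phi'(V)\,(\partial_{t}V+LV)+\phi''(V)\,(A\nabla V,\nabla V)+c\,\bigl(\phi(V)-V\phi'(V)\bigr),
\end{equation*}
from which I read off the properties $\phi$ must have. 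If $0\le\phi'\le 1$ and $V\phi'(V)\le 1$, then, since $\partial_{t}V+LV\le K+HV$ and $\phi'\ge 0$, the first term is at most $K\phi'(V)+HV\phi'(V)\le K+H$. If $\phi''\le 0$, the second term is $\le 0$ because $A\ge 0$. If moreover $\phi(0)=0$, concavity gives $\phi(V)-V\phi'(V)\ge\phi(0)=0$, so with $c\le 0$ the third term is $\le 0$. Hence any increasing concave $\phi\in C^{2}$ with $\phi(0)=0$, $0\le\phi'\le 1$, $s\phi'(s)\le 1$ and $\phi(s)\to\infty$ produces a non-negative $W=\phi(V)\in C^{2,1}(D\times(0,T))\cap C(D\times[0,T))$ with $\partial_{t}W+LW\le K+H$ and, since $\phi$ is continuous and increasing, $\inf_{(D_{k}\setminus D_{k-1})\times[\alpha,\beta]}W=\phi\bigl(\inf_{(D_{k}\setminus D_{k-1})\times[\alpha,\beta]}V\bigr)\to\infty$.

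The remaining requirement $W(\,\cdot\,,0)=\phi(V(\,\cdot\,,0))\in L^{1}(\nu)$ dictates the actual choice of $\phi$ and is the delicate point. The constraint $s\phi'(s)\le 1$ forces $\phi(s)\le\phi(1)+\log s$, i.e.\ at most logarithmic growth, whereas $\int\log(1+V(\,\cdot\,,0))\,d\nu$ need not be finite. I would therefore make $\phi$ grow strictly slower than a logarithm by writing $\phi=\Psi\circ g$ with $g(s)=\log(1+s)$ and $\Psi$ increasing, concave, $C^{2}$, $\Psi(0)=0$, $0\le\Psi'\le 1$, $\Psi(u)\to\infty$. Then $\phi'(s)=\Psi'(g(s))/(1+s)$ satisfies $0\le\phi'\le 1$ and $s\phi'(s)\le 1$, and $\phi$ is concave as a composition of increasing concave maps, so every structural condition above holds automatically. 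Moreover
\begin{equation*}
\int_{D}\phi\bigl(V(x,0)\bigr)\,d\nu=\int_{0}^{\infty}\Psi\,d\tilde\rho,\qquad \tilde\rho:=\nu\circ\bigl(\log(1+V(\,\cdot\,,0))\bigr)^{-1},
\end{equation*}
a probability measure on $[0,\infty)$, so it suffices to find $\Psi$ with $\int_{0}^{\infty}\Psi\,d\tilde\rho<\infty$.

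The construction of such a $\Psi$ is the main obstacle; it is precisely a de la Vallée-Poussin-type statement: for any probability measure $\tilde\rho$ on $[0,\infty)$ there is an increasing concave $\Psi$ with $\Psi(u)\to\infty$, $0\le\Psi'\le 1$ and $\int\Psi\,d\tilde\rho<\infty$. Setting $\widetilde F(u)=\tilde\rho\bigl((u,\infty)\bigr)$, integration by parts rewrites the target as $\int_{0}^{\infty}\Psi'(u)\widetilde F(u)\,du<\infty$, so I must exhibit a non-negative decreasing $\Psi'\le 1$ with $\int_{0}^{\infty}\Psi'=\infty$ but $\int_{0}^{\infty}\Psi'\widetilde F<\infty$. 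Since $\widetilde F$ decreases to $0$, this reduces to the elementary fact that for any null sequence $a_{n}\ge 0$ there is a decreasing $b_{n}\in[0,1]$ with $\sum b_{n}=\infty$ and $\sum a_{n}b_{n}<\infty$: choose $n_{k}$ with $\widetilde F\le 2^{-k}$ beyond $n_{k}$, spend total mass $1$ on each block $[n_{k},n_{k+1})$, and enlarge the $n_{k}$ so the block heights decrease, which makes $\sum a_{n}b_{n}\le\sum 2^{-k}<\infty$; smoothing the resulting step function yields $\Psi'\in C^{1}$. With this $\Psi$ the function $W=\Psi(\log(1+V))$ has all the asserted properties, which completes the proof.
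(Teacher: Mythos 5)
Your proposal is correct and is essentially the paper's own argument in mirrored order: the paper builds a de la Vall\'ee-Poussin-type concave function $\theta$ (via the same block construction $z_{k+1}-z_k\ge z_k-z_{k-1}\ge 1$, $\sigma([z_k,\infty))\le 2^{-k}$, plus $C^1$-smoothing of the slope) integrable against $\nu\circ V(\,\cdot\,,0)^{-1}$ and sets $W=\log(1+\theta(V))$, whereas you set $W=\Psi(\log(1+V))$ with $\Psi$ built identically against the pushforward of $\nu$ under $\log(1+V(\,\cdot\,,0))$, and the chain-rule identity, the concavity bound on the diffusion term, and the use of $c\le 0$ via $\phi(V)-V\phi'(V)\ge 0$ all coincide with the paper's computation. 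If anything, you are slightly more careful than the paper on one point: the bound $\phi'(V)K\le K$ tacitly requires $K\ge 0$, which you handle explicitly by passing to $K^{+}$ while the paper leaves it implicit.
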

\begin{proof}
Construct a non-negative function
$\theta\in C^{2}\left(\mathbb{R}\right)$ such that  $\theta(0)=0$,
$\lim\limits _{r\rightarrow\infty}\theta(r)=+\infty$, $0\leq\theta^{'}(r)\leq1$,
$\theta^{''}(r)\leq0$ and $\theta(V(\, \cdot\,, 0))\in L^{1}\left(\nu\right)$.
For that it suffices to find a function  $\theta$ with the properties listed above and
integrable with respect to the measure $\sigma=\nu\circ W^{-1}(\,\cdot\,, 0)$.
Find an increasing sequence of numbers $z_{k}$ such that
$z_{k+1}-z_{k}\geq z_{k}-z_{k-1}\geq1$
and $\sigma\left([z_{k},\infty)\right)\leq2^{-k}$.
Let $\theta_0$ be a linear function on each interval $[z_{k}, z_{k+1}]$
 with $\Theta_0\left(z_{k}\right)=k-1$.
We obtain a $\sigma-$integrable increasing concave function $\theta_{0}$.
However, it does not belong to the class  $C^{2}$. Take for  $\theta$ the function
$$
\theta(z)=\int_{0}^{z}g(s)ds,\quad g\in C^{1}(\mathbb{R}),
$$
where $g^{'}(z)\leq0$ and  $g(z)=\theta_{0}^{'}(z)$ if $z\in(z_{k}, z_{k+1}-k^{-1})$.
Obviously, it is the required function. Further, taking into account that $\theta$ is
concave and that $c$ is non-positive, we obtain
$$
\partial_{t}\theta(V)+L\theta(V)
=\theta^{'}(V)\bigl(\partial_{t}V+LV\bigr)
+\theta^{''}(V)\left(A\nabla V,\nabla V\right)+c\bigl(\theta(V)-\theta'(V)V\bigr)
\le K+H\theta(V).
$$
The function $W:=\log(1+\theta(V))$ is the required function.
\end{proof}

\begin{remark}\label{m-e}\rm
Modifying a bit the reasoning above, one can
get sufficient conditions for  (\ref{main-e}) of another type.
Let the function  $V$ be such that
$V\in C^{2}(D)$ and
$$
\lim_{k\to\infty}\inf_{D_k\setminus D_{k-1}}V(x)=+\infty.
$$
Let $\zeta\in C^{\infty}([0, +\infty))$, $\zeta(x)=1$ if $x<1$ and $\zeta(x)=0$ if $x>2$.

Set $\varphi_N(x)=\zeta(V(x)/N)$. If $\mu\in\mathcal{M}_{\nu}$, then
$$
\int_{D}\varphi_N\,d\mu_t=\int_{D}\varphi_N\,d\nu+\int_0^t\int_{D}[L_0\varphi_N+c\varphi_N]\,d\mu_s\,ds,
$$
where $L_0=a^{ij}\partial_{x_i}\partial_{x_j}+b^i\partial_{x_i}$.
Observe that $\varphi_N\to 1$ if $N\to\infty$. Thus,   (\ref{main-e}) is ensured by
the identity
$$
\lim_{N\to\infty}\int_0^t\int_{D}L_0\varphi_N\,d\mu_s\,ds=0.
$$
 It is fulfilled, for example, if
$$
\lim_{N\to\infty}\int_0^T\int_{N\le V\le 2N}N^{-1}|L_0 V|+N^{-2}|\sqrt{A}\nabla V|^2\,d\mu=0.
$$
If  $a^{ij}(\,\cdot\,t)\in W^{1, 1}_{loc}(D)$,
$\mu_t=\varrho(x, t)\,dx$ and $\varrho(\,\cdot \,, t)\in W^{1, 1}_{loc}(D)$, then
 (\ref{main-e}) is ensured, for example, by
$$
\lim_{N\to\infty}\int_0^T\int_{N\le V\le 2N}N^{-1}|(b-\beta_{\mu})\nabla V|+N^{-2}|\sqrt{A}\nabla V|^2\,d\mu=0,
$$
where
$\beta^{i}_{\mu}=\sum_{j=1}^{d}\bigl(\partial_{x_j}a^{ij}
+a^{ij}\varrho^{-1}\partial_{x_j}\varrho\bigr)$.

We also note that (\ref{main-e}) has a clear  probabilistic sense:
the diffusion process corresponding to the operator $L$ does not reach the
boundary of the domain $D$ in finite time.
\end{remark}

\section{\sc Uniqueness results}

In this section, we give some conditions sufficient for the uniqueness
and the resulting theorem on the  existence and uniqueness for the Cauchy problem (\ref{e1}).
We generalize the results of  \cite{BRSH-Servy, SHTVP11, SHDAN11}.
Note that some examples  non-uniqueness are given in \cite{BRSH-Servy}.

Since a proof  of uniqueness is usually divided into the two steps: a local
estimate and its application in some limiting procedure, we do not repeat this first step in details
(which can be found in the listed papers).

Let the matrix  $A(x, t)=(a^{ij}(x, t))_{1\le i, j\le d}$ be symmetric and satisfy the following condition:

\, (H1) \, for every $D_k\subset D$ there exist strictly positive numbers $m_k$ and $M_k$, such that
the estimate
$$
m_k|y|^2\le (A(x, t)y, y)\le M_k|y|^2
$$
holds for all $y\in\mathbb{R}^d$ and all  $(x, t)\in D\times(0, T)$ .

Let us recall the definition of the functional class VMO.

Let  $g$ be a bounded function on $\mathbb{R}^{d+1}$. Set
$$
O(g, R)=\sup_{(x, t)\in\mathbb{R}^{d+1}}\sup_{r\le R}r^{-2}|U(x, r)|^{-2}
\int_t^{t+r^2}\int\int_{y, z\in U(x, r)}|g(y, s)-g(z, s)|\,dy\,dz\,ds.
$$
If $\lim\limits_{R\to 0}O(g, R)=0$, then the function $g$ is said to belong to the class $VMO_x(\mathbb{R}^{d+1})$.

If $g\in VMO_x(\mathbb{R}^{d+1})$, then one can always assume that
$O(g, R)\le w(R)$ for all $R>0$, where $w$ is a continuous function
on $[0, +\infty)$ and $w(0)=0$.

Suppose that a function $g$ is defined on $D\times[0, T]$
and is bounded on $D_k\times[0, T]$ for every $k$. Let us extend $g$ by zero to all of $\mathbb{R}^{d+1}$.
If for every function $\zeta\in C^{\infty}_0(D)$ the function $g\zeta$
belongs to the class $VMO_x(\mathbb{R}^{d+1})$,
then we say that $g$ belongs to the class $VMO_{x, loc}(\mathbb{R}^d\times[0, T])$.

Set
$$
L_0u=a^{ij}\partial_{x_i}\partial_{x_j}u+b^i\partial_{x_i}u.
$$

\begin{theorem}\label{th-uniq-1}
Suppose that  $a^{ij}\in VMO_{x, loc}(D\times[0, T])$
and that the matrix
$A=(a^{ij})$ satisfies condition  {\rm (H1)}.
Suppose that there is a function $V=V(x)$ such that $V\in C^{2}(D)$ and
$$
\lim_{k\to\infty}\inf_{D_k\setminus D_{k-1}}V(x)=+\infty.
$$
Then the set
$\mathbb{M}_{\nu}$ consisting of the measures $\mu\in\mathcal{M}_{\nu}$
for which the functions $|L_0V|$, $|\sqrt{A}\nabla V|^2$ belong to
$L^1(\mu, D_k\times[0, T])$ for every $k$ and
$$
\lim_{N\to\infty}\int_0^T\int_{N\le V\le 2N}
N^{-1}|L_0 V|+N^{-2}|\sqrt{A}\nabla V|^2\,d\mu=0,
$$
contains at most one element.
\end{theorem}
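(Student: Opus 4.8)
The plan is to argue by duality, reducing everything to a zero-initial-data statement and then removing the tail by the Lyapunov cutoffs of Remark~\ref{m-e}. Suppose $\mu^1,\mu^2\in\mathbb{M}_\nu$ and put $\sigma_t=\mu^1_t-\mu^2_t$. Then $(\sigma_t)$ is a flow of signed measures with $\sigma_0=0$ satisfying $\int_0^T\int_D[\partial_t u+Lu]\,d\sigma_t\,dt=0$ for every $u\in C_0^\infty(D\times(0,T))$, and it suffices to prove that $\int_D\varphi\,d\sigma_\tau=0$ for each $\varphi\in C_0^\infty(D)$ and a.e.\ $\tau\in(0,T)$, since this yields $\sigma_\tau=0$.

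First I would invoke the local estimate (the step referred to in the text and proved in the cited papers). Fix $\tau$ and $\varphi\in C_0^\infty(D)$ with support in some $D_{k_0}$, and solve the backward dual problem $\partial_t w+Lw=0$ with $w(\cdot,\tau)=\varphi$. On each bounded cylinder the coefficients are admissible, $A$ is uniformly elliptic by {\rm (H1)}, and $a^{ij}\in VMO_x$, so the $W^{2,1}_p$ theory for VMO coefficients produces (as a limit of solutions on the domains $D_m$) a dual solution $w$ defined on $D\times(0,\tau)$ with $\sup|w|\le\sup|\varphi|$ by the maximum principle (here $c\le0$ is used) and with locally bounded gradient; one also records the energy bound for $|\sqrt A\,\nabla w|^2$. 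This is precisely the information the local estimate supplies, and I would not reproduce its proof.

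The heart of the matter is the limiting procedure, where the hypotheses on $V$ enter. With $\varphi_N(x)=\zeta(V(x)/N)$ as in Remark~\ref{m-e} the function $w\varphi_N$ lies in $C_0^\infty(D\times(0,\tau))$ up to approximation (its support sits in the compact set $\{V\le2N\}\subset D$), and for $N$ large $\varphi_N\equiv1$ on $\mathrm{supp}\,\varphi$, so testing the equation for $\sigma$ against $w\varphi_N$ and using $\partial_t(w\varphi_N)+L(w\varphi_N)=\varphi_N(\partial_t w+Lw)+wL_0\varphi_N+2(A\nabla w,\nabla\varphi_N)$ together with $\sigma_0=0$ gives
\begin{equation*}
\int_D\varphi\,d\sigma_\tau=\int_0^\tau\!\!\int_D\bigl[\,wL_0\varphi_N+2(A\nabla w,\nabla\varphi_N)\,\bigr]\,d\sigma_s\,ds.
\end{equation*}
On the right both summands are supported in the annulus $\{N\le V\le2N\}$. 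A direct computation gives $|L_0\varphi_N|\le C\bigl(N^{-1}|L_0V|+N^{-2}|\sqrt A\,\nabla V|^2\bigr)$, so the first summand is dominated, after using $|\sigma|\le\mu^1+\mu^2$ and $\sup|w|\le\sup|\varphi|$, by the very quantity whose vanishing as $N\to\infty$ is postulated in the definition of $\mathbb{M}_\nu$. For the second summand one has $|(A\nabla w,\nabla\varphi_N)|\le CN^{-1}|\sqrt A\,\nabla w|\,|\sqrt A\,\nabla V|$, and a weighted Cauchy--Schwarz splitting $N^{-1}|\sqrt A\nabla w||\sqrt A\nabla V|\le\tfrac{\lambda}{2}|\sqrt A\nabla w|^2+\tfrac{1}{2\lambda}N^{-2}|\sqrt A\nabla V|^2$ reduces it, after letting $N\to\infty$ and then $\lambda\to0$, to the same decay hypothesis provided the $A$-Dirichlet energy $\int_0^\tau\!\int_D|\sqrt A\,\nabla w|^2\,d|\sigma|$ is finite. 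Passing to the limit then yields $\int_D\varphi\,d\sigma_\tau=0$.

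The main obstacle is exactly this last point: controlling the cross term $\int(A\nabla w,\nabla\varphi_N)\,d\sigma$. Unlike the zeroth-order error, it is not read off from the $\mathbb{M}_\nu$-condition but requires transferring the energy estimate for the dual solution from Lebesgue measure to the measure $|\sigma|$, i.e.\ a bound for $\int_0^\tau\!\int_D|\sqrt A\,\nabla w|^2\,d|\sigma|$ uniform along the approximation. This is where the local VMO theory must be coupled with the local density bounds for elements of $\mathcal{M}_\nu$, and it is precisely the step at which the global Lipschitz assumption on $V$ was used in \cite{BRSH-Servy}; carrying the estimate on the annuli $\{N\le V\le2N\}$, where the smallness of $N^{-2}|\sqrt A\,\nabla V|^2$ is available, is what lets us dispense with that assumption.
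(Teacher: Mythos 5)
Your overall scheme (duality with a backward adjoint solution, Lyapunov cutoffs $\varphi_N=\zeta(V/N)$, absorbing the zeroth-order error $wL_0\varphi_N$ into the $\mathbb{M}_\nu$-condition) is the same as the paper's, and that part of your argument is sound. But there is a genuine gap exactly where you yourself flag ``the main obstacle'': you never prove the bound on $\int_0^\tau\!\int_D\varphi_N|\sqrt{A}\,\nabla w|^2\,d|\sigma|$ that your weighted Cauchy--Schwarz step requires, and the route you sketch for it --- ``transferring the energy estimate for the dual solution from Lebesgue measure to $|\sigma|$'' via local VMO theory and local density bounds --- does not work. Those density bounds and the $W^{2,1}_p$ gradient bounds for $w$ are purely local and degenerate as one approaches $\partial D$ (or infinity), whereas the annuli $\{N\le V\le 2N\}$ on which the cross term lives escape precisely to the boundary; there is no uniform constant with which to compare $d|\sigma|$ and $dx\,dt$ along this exhaustion, so finiteness of the $A$-Dirichlet energy of $w$ with respect to $|\sigma|$ cannot be extracted this way.

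The paper closes this gap by a different, purely equation-based trick, and this is the idea your proposal is missing: the half-sum $\sigma=(\sigma_1+\sigma_2)/2$ of the two competing solutions is itself a nonnegative solution of the same Cauchy problem, so one may test \emph{its} equation $\partial_t\sigma=L^{*}\sigma$ against $f^2\varphi_N$ (where $f$ is the adjoint solution). Integrating by parts and using the maximum principle $|f|\le1$ together with $\nu(D)\le1$ yields
\begin{equation*}
\int_0^t\!\!\int_D\varphi_N|\sqrt{A}\,\nabla f|^2\,d\sigma_s\,ds\;\le\;
2+\int_0^t\!\!\int_D\Bigl[\varphi_N^{-1}|\sqrt{A}\,\nabla\varphi_N|^2+|L_0\varphi_N|\Bigr]\,d\sigma_s\,ds
\;=\;2+R_N ,
\end{equation*}
where the cutoff is chosen with $|\eta'|^2\eta^{-1}$ bounded so that $\varphi_N^{-1}|\sqrt{A}\,\nabla\varphi_N|^2\le KN^{-2}|\sqrt{A}\,\nabla V|^2$; thus $R_N\to0$ by the very hypothesis defining $\mathbb{M}_\nu$. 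Cauchy--Schwarz then gives $\bigl|\int(A\nabla\varphi_N,\nabla f)\,d\mu\bigr|\le R_N^{1/2}(2+R_N)^{1/2}\to0$, with no need for any global energy finiteness, no transfer of Lebesgue-measure estimates, and no density upper bounds. This weighted energy estimate for the dual solution \emph{with respect to the solution measure itself} is the crux of the proof; without it (or an equivalent substitute) your argument does not go through.
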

\begin{proof}
The proof is the same as the proof of Theorem 3.1 from \cite{BRSH-Servy},
but there are some new aspects concerning the domain $D$ and the term $c$.
More precisely, local part of the proof of Theorem 3.1 is saved but
global part of the proof requires a new consideration.

We start with the case of the coefficients of
class $\bigcap\limits _{k\in\mathbb{N}}C^{\infty}(D_k\times[0, T])$.

1. Let  $\varphi_N(x)=\eta(V(x)/N)$, where  a nonnegative function  $\eta\in
C^{\infty}_0([0, +\infty))$  is such that $\eta(z)=1$ for $0\le z\le 1$ and
$\eta(z)=0$ for $z>2$. Moreover,  $0\le\eta\le 1$ and there is a number $K>0$
such that the estimate $|\eta'(z)|^2\eta^{-1}(z)\le K$ holds  for all  $x\in \mbox{supp}\,\eta$.
We observe that  $\varphi_N\in C^{\infty}_0(D)$,
in particular, there exists a positive integer $k_0=k_0(N)$ such that
$\mbox{supp}\,\varphi_N \subset D_{k_0}$.

We redefine the functions $a^{ij}$, $b^i$, $c$ outside $D_{k_0}\times[0, T]$ so
that the new functions are bounded together with all derivatives on
$\mathbb{R}^d\times[0, T]$.

2. Let $\psi\in C^{\infty}_0(D)$, $|\psi|\le 1$ and let $f$ be a solution of the adjoint Cauchy problem
$$
\partial_sf+Lf=0, \quad f|_{s=t}=\psi.
$$
Suppose that $\sigma_1$ and $\sigma_2$ are two solutions in the set $\mathbb{M}_{\nu}$.
Set $\mu=\sigma_1-\sigma_2$.
Multiplying the equation $\partial_t\mu=L^{*}\mu$ by $f\varphi_N$ and integrating, we arrive at the equality
$$
\int_{D}\psi\varphi_N\,d\mu_t=\int_0^t\int_{D}2(A\nabla\varphi_N, \nabla f)+fL_0\varphi_N\,d\mu_s\,ds.
$$

3. Set $\sigma=(\sigma_1+\sigma_2)/2$.
Let us estimate
$$
\int_{0}^t\int_{D}\varphi_N|\sqrt{A}\nabla f|^2\,d\sigma_s\,ds.
$$
Multiplying
the equation $\partial_t\sigma=L^{*}\sigma$ by  $f^2\varphi_N$
and integrating, we arrive at the equalities
\begin{multline*}
\int_{D}\psi^2(x)\varphi_N(x)\,d\sigma_t-
\int_{D}f^2(x, 0)\varphi_N(x)\,d\nu=
\\
=2\int_{0}^t\int_{D}|\sqrt{A}\nabla
f|^2\varphi_N\,d\sigma_s\,ds+
\\
+2\int_{0}^t\int_{D} \bigl[f(A\nabla f,
\nabla\varphi_N) +f^2L_0\varphi_N\bigr]\,d\sigma_s\,ds.
\end{multline*}
Note
that  $0\le \varphi_N(x)\le 1$.
By the maximum principle $|f(x, s)|\le \max_x|\psi(x)|\le 1$.
Using Cauchy's inequality, we obtain the following estimate:
$$
\int_{0}^t\int_{D}\varphi_N|\sqrt{A}\nabla f|^2\,d\sigma_s\,ds\le
2+\int_0^t\int_D|\sqrt{A}\nabla\varphi_N|^2\varphi_N^{-1}+|L_0
\varphi_N|\,d\sigma_s\,ds.
$$
Thus,
$$
\int_{D}\psi\varphi_N\,d\mu_t\le
(1+R_N)^{1/2}R_N^{1/2}+R_N,
$$
where
$$
R_N=\int_{0}^T\int_{D}|L_0
\varphi_N|+2\varphi_N^{-1}|\sqrt{A}\nabla\varphi_N|\,d\sigma_s\,ds.
$$
Finally,
letting $N\to\infty$, we arrive at the estimate
$$
\int_{D}\psi(x)\,d\mu_t\le 0.
$$
Replacing $\psi$ by $-\psi$, we get the opposite inequality. Hence for a.e.
$t\in[0, T]$ we have
$$
\int_{D}\psi(x)\,d\mu_t=0.
$$
Since $\psi$ was an arbitrary function in $C^{\infty}_0(\mathbb{R}^d)$ with the
only restriction $|\psi|\le 1$, we conclude that $\mu_t=0$ and hence $\sigma^1=\sigma^2$.

In the general case (without assumptions about the smoothness of coefficients)
one has to  solve the Cauchy problems
$$
\partial_sf_n+L_nf_n=0, \quad f_n|_{s=t}=\psi,
$$
in order to get an analogous estimate,  where the
coefficients of the operator
$L_n$ are smooth approximations of the coefficients of
$L$ on $\mbox{supp}\,\varphi_N$ (more precisely see Theorem 3.1 \cite{BRSH-Servy}).
\end{proof}

\begin{example}\rm
The assumption of Theorem \ref{th-uniq-1} is  fulfilled for all measures $\mu\in\mathcal{M}_{\nu}$ if
$$
(1+V(x))^{-1}|L_0V(x, t)|+(1+V(x))^{-2}|\sqrt{A}(x, t)\nabla V(x)|^2\le W(x, t),
$$
where $W\in C^{2, 1}(D\times(0, T))\bigcap C(D\times[0, T))$, $W(\, \cdot \,,0)\in L^1(\nu)$,
for every interval $[\alpha, \beta]$ from $(0, T)$ one has
$$
\lim_{k\to\infty}\inf_{D_k\setminus D_{k-1}\times[\alpha, \beta]}W(x, t)=+\infty
$$
and for some functions  $K, H\in L^1((0, T))$ with  $H\ge 0$ the estimate
$$
\partial_tW(x, t)+LW(x, t)\le K(t)+H(t)W(x, t)
$$
holds.

Indeed, by  Theorem \ref{th-est} we have $W\in L^1(\mu, D\times[0, T])$, hence
$$
\lim_{N\to\infty}\int_0^T\int_{N<V<2N}W(x, t)\,d\mu=0.
$$
\end{example}

Let us note that the assumptions of Theorem \ref{th-uniq-1} admit
practically any growth of the coefficients (the function  $W$ from the  example above
may grow arbitrarily fast), but we impose restrictions on  $|L_0V|$, i.e.,  we
control the growth of  $L_0V$  from both sides. Moreover,
we have proved the uniqueness only in the class $\mathbb{M}_{\nu}$, but {\it not} for all
measures from $\mathcal{M}_{\nu}$. It is possible to eliminate these
 constraints in the case of more regular coefficients.

We shall assume now that along with (H1) we have

\, (H2) \, for every positive integer $k$ there is a number $\Lambda_k>0$ such that
$$
|a^{ij}(x, t)-a^{ij}(y, t)|\le\Lambda_k|x-y|
$$
for all $x, y\in D_k$ and $t\in[0, T]$.

Let us recall some facts from \cite{BKR} for completeness. The assumption (H1)
ensures the existence of a density $\varrho$ of the solution $\mu$ with respect to
Lebesgue measure. Moreover, if along with  (H1) and (H2) we have $b\in
L^{p}_{loc}(D\times(0, T))$ and $c\in L^{p/2}_{loc}(D\times(0, T))$ for some
$p>d+2$, then we can choose a  version of
$\varrho$ continuous on  $D\times(0, T)$ such that
 for a.e. $t\in (0, T)$ the function $\varrho(\, \cdot \,, t)$
belongs to  $W^{1, p}(U)$ for every closed ball $U\subset D$. Since for a.e.
$t\in (0, T)$ the measure $\mu_t(dx)=\varrho(x, t)\,dx$ is a subprobability
measure on $D$,  Harnack's inequality ensures that for every closed ball $U$
from $D$ and for every interval  $J\subset(0, T)$ there exists a number
$C>0$ such  that  $\varrho(x, t)\ge C$ for all $(x, t)\in U\times J$.

Below we deal with the  continuous version of the density $\varrho$.

We recall that,  for every measure $\mu$ given by a Sobolev density $\varrho$ with respect to Lebesgue measure,
its logarithmic gradient $\beta_{\mu}$
with respect to the metric generated by the matrix $A$ is defined
by the following formula:
$$
\beta^{i}_{\mu}=\sum_{j=1}^{d}\bigl(\partial_{x_j}a^{ij}+a^{ij}\varrho^{-1}\partial_{x_j}\varrho\bigr).
$$

Further in this section we assume that the coefficients $b$ and $c$ are locally
integrable with respect to  Lebesgue measure on $D\times (0, T)$
 to power $p$ and $p/2$, respectively, for some  $p>d+2$
and that the conditions (H1) and (H2) are fulfilled.

Moreover, we consider only the set  $\mathbb{M}_{\nu}$ of measures
$\mu\in \mathcal{M}_{\nu}$ satisfying the condition
$$
b\in L^2(\mu, D_k\times[0, T]) \quad \forall D_k.
$$
For example, the latter condition is fulfilled for the whole class $\mathcal{M}_{\nu}$ in the
case of a drift bounded  on $D_k\times[0, T]$   or in
the case  $b\in L^s(D_k\times[0, T])$ and $\mu=\varrho\,dx\,dt$ with $\varrho\in
L^r(D_k\times[0, T])$, where $2/s+1/r=1$.

Suppose  that there are two solutions to the Cauchy problem (\ref{e1}) in the class $\mathbb{M}_{\nu}$
given by densities $\sigma$ and $\varrho$ with respect to Lebesgue measure.
Then these densities are continuous on $D\times(0, T)$.
In addition, the functions $\sigma$ and $\varrho$ are strictly positive.
Let $v(x, t)=\sigma(x, t)/\varrho(x, t)$. The function $v$ is continuous and positive on $D\times(0, T)$.

\begin{lemma}\label{lem4.1}
Suppose that for a.e. $t\in(0, T)$ we have the estimates
$$
\int_{D}\varrho(x, t)\,dx=\nu(D)+\int_0^t\int_{D}c(x, s)\varrho(x, s)\,dx\,ds
$$
and
$$
\int_{D}\sigma(x, t)\,dx\le\nu(D)+\int_0^t\int_{D}c(x, s)\sigma(x, s)\,dx\,ds.
$$
Suppose also that for every  $\lambda>0$ we have for a.e.  $t\in (0, T)$
\begin{equation}\label{main2}
\int_{\mathbb{R}^d}e^{\lambda(1-v(x, t))}\varrho(x, t)\,dx\le 1.
\end{equation}
 Then  $v\equiv 1$, i.e.,  $\sigma=\varrho$.
\end{lemma}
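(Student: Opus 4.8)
The plan is to derive two opposite comparisons between the total masses of $\sigma$ and $\varrho$ and then squeeze them together. First I would exploit the exponential estimate \eqref{main2} by letting $\lambda\to\infty$. Fix a time $t$ for which all three hypotheses hold (a.e.\ $t$) and split $D$ into the sets $\{v<1\}$, $\{v=1\}$, $\{v>1\}$. On $\{v<1\}$ the integrand $e^{\lambda(1-v)}\varrho$ increases monotonically to $+\infty$ as $\lambda\to\infty$, so by the monotone convergence theorem $\int_{\{v<1\}}e^{\lambda(1-v)}\varrho\,dx$ would tend to $+\infty$ if $\{v<1\}$ carried positive measure; since this integral is dominated by the left-hand side of \eqref{main2}, which is bounded by $1$, we must have $\int_{\{v<1\}}\varrho\,dx=0$. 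Because $\varrho>0$ on $D$ (strict positivity was noted just before the lemma), this forces $v\ge1$ a.e., i.e.\ $\sigma\ge\varrho$ a.e.

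From $\sigma\ge\varrho$ I immediately obtain the first comparison $m_\sigma(t):=\int_D\sigma(x,t)\,dx\ge\int_D\varrho(x,t)\,dx=:m_\varrho(t)$. For the reverse comparison I would use the asymmetry between the two mass relations together with $c\le0$: since $\sigma\ge\varrho\ge0$ and $c\le0$, we have $c\sigma\le c\varrho$ pointwise, hence $\int_0^t\!\int_D c\sigma\le\int_0^t\!\int_D c\varrho$. Inserting this into the mass \emph{inequality} for $\sigma$ and using the mass \emph{equality} for $\varrho$ gives
\[
m_\sigma(t)\le\nu(D)+\int_0^t\!\int_D c\sigma\,dx\,ds\le\nu(D)+\int_0^t\!\int_D c\varrho\,dx\,ds=m_\varrho(t).
\]
Combining the two comparisons yields $m_\sigma(t)=m_\varrho(t)$ for a.e.\ $t$, so $\int_D(\sigma-\varrho)\,dx=0$ with a nonnegative integrand; therefore $\sigma=\varrho$ a.e., and by continuity of both densities $v\equiv1$ on $D\times(0,T)$.

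The integrability needed to make the mass relations meaningful ($|c|\in L^1(\mu)$) is built into the class $\mathcal{M}_\nu$, and positivity of $\varrho$ and $\sigma$ justifies forming $v$. I expect the only genuinely delicate point to be the passage $\lambda\to\infty$ in the first step: the whole argument hinges on observing that \eqref{main2} holding for \emph{all} $\lambda>0$ is exactly what rules out $\{v<1\}$ having positive measure, and that it is the \emph{one-sided} conclusion $v\ge1$ (rather than a two-sided bound) which then interacts with the equality/inequality asymmetry and the sign of $c$ to close the loop. Everything else is routine bookkeeping with the monotone and dominated convergence theorems.
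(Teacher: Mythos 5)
Your proof is correct and follows essentially the same route as the paper's: let $\lambda\to\infty$ in \eqref{main2} to conclude $v\ge 1$ (the paper does this via a ball and the continuity of $v$, you via monotone convergence on $\{v<1\}$ and strict positivity of $\varrho$ --- both equivalent), and then combine $\sigma\ge\varrho$, $c\le 0$, and the equality/inequality pair for the masses to force $\int_D\sigma(x,t)\,dx=\int_D\varrho(x,t)\,dx$ and hence $\sigma=\varrho$. The only point worth making explicit is the quantifier order in \eqref{main2}: since it holds for each fixed $\lambda$ only on a $\lambda$-dependent full-measure set of $t$, you should restrict to a countable family (say integer $\lambda$, as the paper does) so that a single full-measure set of times serves all $\lambda$ simultaneously in the limit $\lambda\to\infty$.
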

\begin{proof}
Let  $t$ be such that  $\varrho(\, \cdot \,, t)$ and $\sigma(\, \cdot \,, t)$
satisfy the listed conditions and  (\ref{main2}) holds for every positive integer
$\lambda$. We observe that the set of points  $t$ for which this is not true is a
set of zero Lebesgue measure.
If there is a ball $U\subset D$ such that
$v(x, t)\le 1-\delta$ for each $x\in U$ and some $\delta>0$, then
$$
e^{\lambda\delta}\int_{U}\varrho\,dx\le
\int_{U}e^{\lambda(1-v(x, t))}\varrho(x, t)\,dx\le 1.
$$
Letting  $\lambda\to +\infty$, we obtain a contradiction.
Hence, $v(x, t)\ge 1$ and $\sigma\ge \varrho$ for all $(x, t)\in D\times(0, T)$.
Moreover,
$$
\int_{D}\varrho(x, t)\,dx\le \int_{D}\sigma(x, t)\,dx,
\quad \int_0^t\int_{D}|c(x, s)|\varrho(x, s)\,dx\,ds\le
\int_0^t\int_{D}|c(x, s)|\sigma(x, s)\,dx\,ds.
$$
We observe that
\begin{multline*} \nu(D)=\int_{D}\varrho(x,
t)\,dx+\int_0^t\int_{D}|c(x, s)|\varrho(x, s)\,dx\,ds\le
\\
\int_{D}\sigma(x,
t)\,dx+\int_0^t\int_{D}|c(x, s)|\sigma(x, s)\,dx\,ds=\nu(D).
\end{multline*}
Hence we have
$$
\int_{D}\varrho(x, t)\,dx=\int_{D}\sigma(x, t)\,dx,
$$
which ensures  $v\equiv 1$. The lemma is proved.
\end{proof}

The next lemma is crucial in our approach.
In this lemma, it is possible to take $e^{\lambda(1-z)}$ and $e^{\lambda(1-z)}-e^{\lambda}$  for $f$.

\begin{lemma}\label{lem4.2}
Let $\psi\in C^{\infty}_0(D)$, $\psi\ge 0$ and $0<t<T$.
Then the following estimate holds:
\begin{equation}\label{in4.2.2}
\int_{D}f(v(x, t))\varrho(x, t)\psi(x)\,dx\le f(1)\int_{D}\psi(x)\,d\nu+
\int_0^t\int_{D}\varrho f(v)L\psi\,dx\,ds.
\end{equation}
If, in addition, $(b-\beta_{\mu})\varrho\in L^1(D_k\times (0, T))$
for every $k$, then
\begin{multline}\label{in4.2.3}
\int_{D}f(v(x, t))\varrho(x, t)\psi(x)\,dx\le f(1)\int_{D}\psi(x)\,d\nu+
\\
+\frac{1}{2}\int_0^t\int_{D}\varrho(A\nabla\psi, \nabla\psi)\psi^{-1}|f'(v)|^2f''(v)^{-1}\,dx\,ds+
\\
+\int_0^t\int_{D}f(v)(b-\beta_{\mu}, \nabla\psi)\varrho\,dx\,ds.
\end{multline}
\end{lemma}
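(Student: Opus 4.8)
The plan is to reduce the statement to a pointwise parabolic computation for the ratio $v=\sigma/\varrho$. This is legitimate because the regularity theory recalled before the lemma (conditions {\rm (H1)}, {\rm (H2)} together with $b\in L^{p}_{loc}$, $c\in L^{p/2}_{loc}$, $p>d+2$) provides continuous versions of $\sigma$ and $\varrho$ that belong to $W^{1,p}(U)$ for a.e.\ $t$ on each ball $U$ with $\overline U\subset D$ and, by Harnack's inequality, are bounded away from zero on compacta. Hence $v$ is continuous, strictly positive and locally Sobolev in $x$, and all the manipulations below make sense in the weak formulation; alternatively one first mollifies the coefficients as in Theorems \ref{th1-1}--\ref{th1-2}, carries out the computation classically, and passes to the limit with the uniform local bounds of \cite{BKR}.

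First I would derive the equation satisfied by $v$. Writing $\sigma=v\varrho$ and subtracting $v$ times $\partial_t\varrho=L_0^{*}\varrho+c\varrho$ from $\partial_t\sigma=L_0^{*}\sigma+c\sigma$, the potential term cancels and, using $\sum_j\partial_{x_j}(a^{ij}\varrho)=\varrho\beta^i_{\mu}$, one gets the divergence identity
$$\varrho\,\partial_t v=\partial_{x_i}\bigl(a^{ij}\varrho\,\partial_{x_j}v\bigr)+\varrho(\beta_{\mu}-b,\nabla v).$$
Multiplying by $f'(v)$ and invoking $f'(v)\partial_{x_i}(a^{ij}\varrho\,\partial_{x_j}v)=\partial_{x_i}(a^{ij}\varrho\,\partial_{x_j}f(v))-f''(v)\varrho(A\nabla v,\nabla v)$, and recalling $\partial_t\varrho=\partial_{x_i}(\varrho(\beta^i_{\mu}-b^i))+c\varrho$, I obtain for $g:=f(v)\varrho$ the identity
$$\partial_t g=\partial_{x_i}\bigl(f(v)\varrho(\beta^i_{\mu}-b^i)\bigr)+\partial_{x_i}\bigl(a^{ij}\varrho\,\partial_{x_j}f(v)\bigr)+c\varrho f(v)-f''(v)\varrho(A\nabla v,\nabla v).$$
For both admissible choices of $f$ one has $f''\ge0$, so the last term is nonpositive.

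To obtain (\ref{in4.2.2}) I would test this identity against $\psi\ge0$, integrate over $D$ and move every derivative onto $\psi$; applying $\sum_i\partial_{x_i}(a^{ij}\varrho)=\varrho\beta^j_{\mu}$ a second time, the two occurrences of $\beta_{\mu}$ cancel and the surviving first- and second-order terms combine with $c\varrho f(v)\psi$ into $\int_D f(v)\varrho\,L\psi\,dx$, while the $f''$ term is discarded by nonpositivity; this yields $\frac{d}{ds}\int_D f(v)\varrho\psi\,dx\le\int_D f(v)\varrho\,L\psi\,dx$, and integrating over $[s_0,t]$ and letting $s_0\to0$ gives (\ref{in4.2.2}), the initial term tending to $f(1)\int_D\psi\,d\nu$ since $v(\cdot,s_0)\to1$ as both densities converge to $\nu$. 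For (\ref{in4.2.3}) I would integrate by parts only once in the second-order term, keep the first-order term as $\int_D f(v)(b-\beta_{\mu},\nabla\psi)\varrho\,dx$, and estimate the cross term $-\varrho f'(v)(A\nabla\psi,\nabla v)$ against the good term $-f''(v)\varrho\psi(A\nabla v,\nabla v)$ by Young's inequality in the metric generated by $A$, which produces $\tfrac12\varrho(A\nabla\psi,\nabla\psi)\psi^{-1}|f'(v)|^{2}f''(v)^{-1}$; the potential contribution $\int_D c\varrho f(v)\psi$ is nonpositive (here $c\le0$, $\psi\ge0$, $f\ge0$) and is dropped. The hypothesis $(b-\beta_{\mu})\varrho\in L^{1}(D_k\times(0,T))$ is precisely what guarantees finiteness of the retained first-order term.

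The main difficulty is not the algebra but its justification. The delicate points are to make the identity for $v$ and the repeated integrations by parts rigorous under only local Sobolev regularity of $\sigma$ and $\varrho$, and to control the limit $s_0\to0$ so that $\int_D f(v(\cdot,s_0))\varrho(\cdot,s_0)\psi\,dx$ genuinely converges to $f(1)\int_D\psi\,d\nu$. Following \cite{BRSH-Servy}, I would settle both by first establishing the estimates for mollified coefficients, where $\sigma$ and $\varrho$ are classical and strictly positive, and then passing to the limit via the uniform local estimates of \cite{BKR}; the divisions by $f''(v)$ and by $\psi$ appearing in (\ref{in4.2.3}) are harmless because $f''>0$ and the cross term is supported in $\{\psi>0\}$.
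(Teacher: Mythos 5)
Your algebraic core is correct and is exactly the mechanism behind the cited proof: the potential term cancels in the equation for $v=\sigma/\varrho$, which satisfies $\varrho\,\partial_t v=\partial_{x_i}(a^{ij}\varrho\,\partial_{x_j}v)+\varrho(\beta_{\mu}-b,\nabla v)$; multiplying by $f'(v)$, testing with $\psi$, integrating by parts twice (for (\ref{in4.2.2})) or once (for (\ref{in4.2.3})) and absorbing the cross term $-\varrho f'(v)(A\nabla\psi,\nabla v)$ into the dissipation $-f''(v)\varrho\psi(A\nabla v,\nabla v)$ by Cauchy's inequality produces precisely the two right-hand sides (note that dropping $\int c\varrho f(v)\psi$ in (\ref{in4.2.3}) requires $f\ge 0$, so there only $f(z)=e^{\lambda(1-z)}$ is admissible, as you implicitly assume). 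Be aware that the paper itself proves none of this: its proof of Lemma \ref{lem4.2} is a reduction to \cite{SHTVP11} (the case $D=\mathbb{R}^d$, $c=0$) together with the remark that the estimate is local in space and that the term $c$ changes nothing, so the real comparison is with the method of that reference.

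The genuine gap is the initial-condition step, and your fallback does not close it. You assert that $\int_D f(v(\cdot,s_0))\varrho(\cdot,s_0)\psi\,dx\to f(1)\int_D\psi\,d\nu$ ``since $v(\cdot,s_0)\to 1$ as both densities converge to $\nu$''. Weak convergence of $\sigma(\cdot,s_0)\,dx$ and $\varrho(\cdot,s_0)\,dx$ to the same measure $\nu$ implies nothing of the sort: the map $(\sigma,\varrho)\mapsto\int f(\sigma/\varrho)\varrho\,\psi\,dx$ is an $f$-divergence (the integrand is the perspective function of the convex $f$), hence jointly convex and weakly \emph{lower} semicontinuous, so weak convergence yields only $\liminf_{s_0\to 0}\int f(v(s_0))\varrho(s_0)\psi\,dx\ge f(1)\int\psi\,d\nu$ --- the wrong direction. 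Upper semicontinuity is simply false at this level of generality: with $\varrho\equiv 1$ and $\sigma(x,s)=1+\sin(x_1/s)$ on a cube, both flows converge weakly to the same limit while $\int e^{\lambda(1-v)}\varrho\psi\,dx$ converges, by Jensen's inequality, to a limit strictly larger than $\int\psi\,dx$; moreover $\nu$ may be a Dirac mass, so $v$ has no pointwise meaning at $t=0$ at all. Mollifying the coefficients does not repair this: the given solutions $\sigma$ and $\varrho$ do not solve the mollified equation, and even for smooth coefficients the datum is still an arbitrary measure, so the obstruction sits at $t=0$, not in the interior (where your Steklov/approximation justification is fine). What the cited proofs actually do is avoid any continuity of the nonlinear functional at $t=0$: by convexity, $f(v)\ge f(w)+f'(w)(v-w)$ with equality at $w=v$, so $\int f(v(t))\varrho(t)\psi\,dx$ is a supremum of the \emph{linear} functionals $\int f'(w)\psi\,d\sigma_t+\int\bigl(f(w)-wf'(w)\bigr)\psi\,d\varrho_t$; each of these is carried back to $t=0$ through the weak formulations of $\sigma$ and $\varrho$ tested against solutions of backward adjoint Cauchy problems, and the maximum principle combined with the pointwise bound $f(w)+f'(w)(1-w)\le f(1)$ is what generates the term $f(1)\int\psi\,d\nu$. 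Without that duality/adjoint argument, your differential inequality proves (\ref{in4.2.2}) and (\ref{in4.2.3}) only with $\lim_{s_0\to 0}\int f(v(s_0))\varrho(s_0)\psi\,dx$ in place of $f(1)\int\psi\,d\nu$, which is insufficient for the uniqueness theorems the lemma is used for.
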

\begin{proof}
The case $D=\mathbb{R}^d$ and $c=0$ was considered in \cite{SHTVP11}. Since
the estimate is local (only on  $\mbox{supp}\,\psi$), there is no difference between
arbitrary $D$ and $\mathbb{R}^d$. The addition of a new term $c$ does not give new difficulties
and the reasonings are completely the same.
\end{proof}

\begin{theorem}\label{th2-2}
Suppose that  {\rm (H1)} and {\rm (H2)} are fulfilled and
$$
b\in L^p_{loc}(D\times(0, T)), \quad c\in L^{p/2}_{loc}(D\times(0, T))
$$
 for some  $p>d+2$.
Assume also that there exists a function  $V$ such that
$V\in C^{2}(D)$ and
$$
\lim_{k\to\infty}\inf_{D_k\setminus D_{k-1}}V(x)=+\infty.
$$
Suppose that at least one of the following conditions is fulfilled:

{\rm (i)} for some measure  $\mu\in\mathbb{M}_{\nu}$
one has
$$
\lim_{N\to\infty}\int_0^T\int_{N\le V\le 2N}N^{-1}|L_0 V|+N^{-2}|\sqrt{A}\nabla V|^2\,d\mu=0,
$$

{\rm (ii)}
for some measure  $\mu\in\mathbb{M}_{\nu}$
one has
$$
\lim_{N\to\infty}\int_0^T\int_{N\le V\le 2N}N^{-1}|(b-\beta_{\mu})\nabla V|+N^{-2}|\sqrt{A}\nabla V|^2\,d\mu=0,
$$

{\rm (iii)} for some  $K>0$ and all  $(x, t)\in D\times(0, T)$ the inequality
$$
LV(x, t)\le K+KV(x)
$$
holds. Then the class $\mathbb{M}_{\nu}$ consists of at
most one element.
\end{theorem}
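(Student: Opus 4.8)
The plan is to reduce uniqueness to a single application of Lemma \ref{lem4.1}. Assuming $\mathbb{M}_{\nu}\neq\varnothing$, I would single out the measure $\mu=\varrho\,dx\,dt$ for which the chosen hypothesis among (i)--(iii) holds (under (iii), where the condition is on $V$ alone, any element of $\mathbb{M}_{\nu}$ may serve), take $\sigma$ to be the density of an arbitrary second element, and set $v=\sigma/\varrho$. By the regularity facts recalled before Lemma \ref{lem4.1}, both $\varrho$ and $\sigma$ are continuous and strictly positive on $D\times(0,T)$, $\sigma$ satisfies the subprobability inequality automatically, and $\varrho$ satisfies the mass identity $\int_{D}\varrho(x,t)\,dx=\nu(D)+\int_{0}^{t}\int_{D}c\varrho\,dx\,ds$: under (iii) this is Theorem \ref{th-est} for $V$ (with Proposition \ref{lem2-Lyap_f} arranging $V(\,\cdot\,,0)\in L^{1}(\nu)$), and under (i) or (ii) it is precisely Remark \ref{m-e}. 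Everything then hinges on the exponential bound (\ref{main2}); once $\int_{D}e^{\lambda(1-v)}\varrho\,dx\le1$ is known for all $\lambda>0$ and a.e. $t$, Lemma \ref{lem4.1} forces $v\equiv1$, so $\sigma=\varrho$, and arbitrariness of $\sigma$ yields that $\mathbb{M}_{\nu}$ has at most one element.

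To prove (\ref{main2}) I would apply Lemma \ref{lem4.2} with the \emph{bounded} nonlinearity $f(z)=e^{\lambda(1-z)}-e^{\lambda}$ and the cutoffs $\varphi_{N}=\eta(V/N)$ from the proof of Theorem \ref{th-uniq-1}. Here $f(1)=1-e^{\lambda}$, $-e^{\lambda}\le f\le0$, and $|f'|^{2}/f''=e^{\lambda(1-z)}$. The boundedness of $f$ is the decisive device: it severs the factor $f(v)$ from the unknown weight $e^{\lambda(1-v)}$ in the error terms, which is not yet known to be integrable, so that the passage $N\to\infty$ becomes legitimate (with the naive $f=e^{\lambda(1-z)}$ it would not). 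Under (i) I use form (\ref{in4.2.2}): the part $\int_{0}^{t}\int_{D}\varrho f(v)L_{0}\varphi_{N}$ is supported on $\{N\le V\le2N\}$ and dominated by a constant times $\int_{N\le V\le2N}(N^{-1}|L_{0}V|+N^{-2}|\sqrt{A}\nabla V|^{2})\varrho$, which vanishes by (i) with $\mu=\varrho\,dx\,dt$, while the remaining $\int_{0}^{t}\int_{D}\varrho f(v)c\,\varphi_{N}$ converges and is carried to the end. Under (ii) I use form (\ref{in4.2.3}) (its hypothesis $(b-\beta_{\mu})\varrho\in L^{1}_{loc}$ follows from {\rm (H1)}--{\rm (H2)} and $b\in L^{2}(\mu)$); substituting $|f'|^{2}/f''=e^{\lambda(1-z)}$ and $\nabla\varphi_{N}=N^{-1}\eta'\nabla V$ turns its two error integrals into the combination $N^{-2}|\sqrt{A}\nabla V|^{2}+N^{-1}|(b-\beta_{\mu})\nabla V|$ over $\{N\le V\le2N\}$, weighted by the bounded $\varrho f(v)$, which (ii) makes vanish.

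Letting $N\to\infty$ closes the estimate. By monotone convergence $\int_{D}e^{\lambda(1-v)}\varrho\varphi_{N}\uparrow\int_{D}e^{\lambda(1-v)}\varrho$ (a priori possibly $+\infty$), by dominated convergence $e^{\lambda}\int_{D}\varrho\varphi_{N}\to e^{\lambda}\int_{D}\varrho$, and $f(1)\int_{D}\varphi_{N}\,d\nu\to(1-e^{\lambda})\nu(D)=1-e^{\lambda}$. Combining these with the vanishing error terms, a short rearrangement that uses only $c\le0$, the bound $\int_{D}\varrho\le\nu(D)=1$, and the mass identity for $\varrho$ collapses to $\int_{D}e^{\lambda(1-v)}\varrho\le1$; since the right-hand side is finite, the integral is finite and (\ref{main2}) holds. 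The general, non-smooth case is reached exactly as in Theorem \ref{th-uniq-1}, by approximating the coefficients of the adjoint problems underlying Lemma \ref{lem4.2}.

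The subtle case is (iii), where only the one-sided bound $LV\le K+KV$ is available rather than the two-sided control of $L_{0}V$ present in (i). Two points save it. First, because $f\le0$ and $\eta'\le0$ the first-order error $\varrho f(v)N^{-1}\eta'L_{0}V$ is one-signed, so it can be estimated \emph{from above} through $L_{0}V=LV-cV\le K+KV+|c|V$, and then killed by the tail decay $\mu(\{N\le V\le2N\})\to0$ together with $\sup_{t}\int_{D}V\,d\mu_{t}<\infty$ and $|c|\in L^{1}(\mu)$, both furnished by Theorem \ref{th-est}. Second, and this I expect to be the genuine obstacle, the quadratic term $N^{-2}|\sqrt{A}\nabla V|^{2}$ must be controlled with no hypothesis on $\nabla V$; here I would test the Lyapunov inequality against concave cutoffs supported on unit strips $\{j\le V\le j+1\}$, as in the proof of Theorem \ref{th-est} where the negative sign of $\zeta''$ is exploited, to obtain a bound $\int_{0}^{T}\int_{j\le V\le j+1}|\sqrt{A}\nabla V|^{2}\,d\mu\le C$ uniform in $j$, and then sum over $j\in[N,2N]$ to conclude $\int_{N\le V\le2N}N^{-2}|\sqrt{A}\nabla V|^{2}\,d\mu\le C/N\to0$.
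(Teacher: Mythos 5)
Your proposal is correct in substance and follows the paper's overall architecture --- Lemma \ref{lem4.2} with exponential nonlinearities and Lyapunov cutoffs to establish the exponential bound (\ref{main2}), then Lemma \ref{lem4.1} to force $v\equiv 1$, with the mass identity for $\varrho$ supplied by Remark \ref{m-e} under (i)/(ii) and by Theorem \ref{th-est} together with Proposition \ref{lem2-Lyap_f} under (iii) --- but it deviates in two technical devices, one of which is a genuinely different route. First, you run all three cases with the shifted nonlinearity $f(z)=e^{\lambda(1-z)}-e^{\lambda}$, which forces you to carry the potential term and invoke the mass identity already inside the derivation of (\ref{main2}); this works, but your stated reason for it is mistaken: since $v=\sigma/\varrho\ge 0$, the ``naive'' $f(z)=e^{\lambda(1-z)}$ is itself bounded by $e^{\lambda}$ on the relevant range, and the paper uses exactly this $f$ in cases (i) and (ii) --- with the extra benefit that then $f\ge 0$, so the term $\int\varrho f(v)c\psi$ is nonpositive (as $c\le 0$) and can be discarded outright, shortening the limit passage. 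Second, and more substantively, in case (iii) the paper never estimates the quadratic term at all: it takes a decreasing \emph{convex} cutoff $\zeta$ (with $\zeta'\le 0$, $\zeta''\ge 0$ for $z>0$), so that with $f\le 0$ the term $\varrho f(v)N^{-2}\zeta''(N^{-1}V)|\sqrt{A}\nabla V|^2$ is nonpositive and is simply dropped, while the first-order term, now spread over all of $\{V<N\}$ rather than an annulus, is killed by the splitting $\{V<\gamma N\}\cup\{\gamma N\le V<N\}$ using only the finiteness of $\mu$. You instead keep the annular cutoff, which is precisely why the quadratic term becomes, as you say, the genuine obstacle; your remedy --- strip-localized energy bounds extracted from the identity in the proof of Theorem \ref{th-est}, where the grouping $\zeta'(V)LV+c\bigl(\zeta(V)-V\zeta'(V)\bigr)$ with concave $\zeta$ and the sign of $\zeta''$ give a bound uniform over unit strips, summed over $j\in[N,2N]$ to get $O(1/N)$ --- does go through, but at the price of needing $V\in L^1(\nu)$ and $\sup_t\int_D V\,d\mu_t<\infty$ from Theorem \ref{th-est}, integrability that the paper's sign trick renders unnecessary for this step. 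In short: your proof is valid, the same in skeleton, but the paper's choice of cutoff makes case (iii) essentially free, whereas your route reaches the same conclusion through an additional, heavier energy estimate.
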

\begin{proof}
Let us prove (i). Let a measure
$\mu\in\mathbb{M}_{\nu}$ satisfy the condition from (i) and have
a density $\varrho$ with respect to Lebesgue measure.
By Remark \ref{m-e}, the equality (\ref{main-e}) holds for   $\mu$. Suppose
that there is yet another measure in  $\mathbb{M}_{\nu}$ given by a density
$\sigma$. Set $v=\sigma/\varrho$. Let $\psi(x)=\zeta(V(x)/N)$, where $\zeta\in
C^{\infty}_0(\mathbb{R}^d)$ is a nonnegative function such that $\zeta(x)=1$ if
$|x|\le 1$ and $\zeta(x)=0$ if $|x|>2$, and there exists a number $M>0$ such
that for all $x\in \mbox{supp}\,\zeta$   one has
$$
|\zeta(x)|\le M,
\quad |\nabla\zeta(x)|\le M, \quad |\nabla\zeta(x)|^2\zeta^{- 1}(x)\le M.
$$
Let
$f(z)=e^{\lambda(1-z)}$. It is clear that $|f(z)|\le e^{\lambda}$ if $z\ge 0$.
Using (\ref{in4.2.3}) from Lemma \ref{lem4.2} and the fact $c\le 0$, we obtain
\begin{multline*}
\int_{D}e^{\lambda(1-v(x, t))}\varrho(x,
t)\zeta(V(x)/N)\,dx\le\int_{D}\zeta(V(x)/N)\,d\nu+
\\
+e^{\lambda}M\int_0^t\int_{N\le V\le 2N}N^{-1}|L^\circ V|+N^{-2}|\sqrt{A}\nabla
V|^2\,\varrho\,dx.
\end{multline*}
Letting  $N\to +\infty$ and using Lemma \ref{lem4.1}, we obtain the required assertion.

The case(ii) can be treated similarly. Let us now prove (iii).

Suppose that there are two measures in  $\mathbb{M}_{\nu}$ given by densities
$\sigma$ and $\varrho$ with respect to Lebesgue measure. According to  Theorem
\ref{th-est}, one has  (\ref{main-e}) for both measures. Set  $v=\sigma/\varrho$.
Let $\psi(x)=\zeta(N^{-1}V(x))$, where a nonnegative function $\zeta\in
C^{\infty}_0(\mathbb{R})$ is such that  $\zeta(0)=1$, $\zeta(z)=0$ if $|z|>1$,
$0\le \zeta\le 1$ and, moreover, $\zeta'(z)\le 0$ and $\zeta''(z)\ge 0$ if $z>0$.

Let $f(z)=e^{\lambda(1-z)}-e^{\lambda}$. Then $f(z)\le 0$ and $|f(z)|\le
2e^{\lambda}$ if $z\ge 0$. Observe that
$$
f(v)\zeta'LV\le  (K+KV)f(v)\zeta'
$$
since $f(v)\zeta'\ge 0$. Using (\ref{in4.2.2}) from Lemma \ref{lem4.2}, we
obtain
\begin{multline*}
\int_{D}(e^{\lambda(1-v(x, t))}-e^{\lambda})\varrho(x,
t)\zeta(N^{-1}V(x))\,dx\le (1-e^{\lambda})\int_{D}\zeta(N^{-1}V(x))\,d\nu+
\\
+2e^{\lambda}MN^{- 1}\int_0^t\int_{V<N}(K+KV)\varrho\,dx\,ds
+\int_0^t\int_{D}\zeta(N^{- 1}V(x))f(v(x, s))c(x, s)\varrho(x, s)\,dx\,ds.
\end{multline*}
We observe that
$$
\lim_{N\to+\infty}N^{-1}\int_0^t\int_{V<N}(K+KV)\varrho\,dx\,ds=0.
$$
Indeed, let $\gamma\in (0, 1)$
and $N>\gamma^{-1}$, then
$$
N^{- 1}\int_0^t\int_{V<N}(K+KV)\varrho\,dx\,ds\le
\gamma\int_0^t\int_{V<\gamma N}\varrho\,dx\,ds
+K\int_0^t\int_{\gamma N<V<N}\varrho\,dx\,ds.
$$
Hence we have
$$
\lim_{N\to+\infty}N^{-1}\int_0^t\int_{V<N}(K+KV)\varrho\,dx\,ds\le
\gamma\int_0^t\int_{D}\varrho\,dx\,ds.
$$
Letting $\gamma\to 0$, we obtain the required observation.
Thus, letting  $N\to+\infty$, we obtain
$$
\int_{D}(e^{\lambda(1-v(x, t))}-
e^{\lambda})\varrho(x, t)\,dx\le (1-
e^{\lambda})\int_{D}\,d\nu+
\int_0^t\int_{D}(e^{\lambda(1-v)}-e^{\lambda})c(x, s)\varrho(x, s)\,dx\,ds.
$$
Since $c\le 0$ and  for a.e. $t\in (0, T)$
the identity
$$
\int_{D}\varrho(x, t)\,dx=\nu(D)+\int_0^t\int_{D}c(x, s)\varrho(x, s)\,dx\,ds,
$$
holds. Then for a.e. $t$ we have
$$
\int_{\mathbb{R}^d}e^{\lambda(1-v(x, t))}\varrho(x, t)\,dx\le 1.
$$
Using Lemma \ref{lem4.1}, we complete the proof.
\end{proof}

A combination of  Theorem \ref{th1-2} and Theorem \ref{th2-2} yields
the following sufficient conditions for existence and uniqueness.

\begin{theorem}\label{ExiUni}
Suppose that {\rm (H1)} and {\rm (H2)} hold and
$$
c\in L^{p/2}_{loc}(D\times(0, T)), \quad b\in L^{\infty}(D_k\times[0, T])
$$
for some $p>d+2$ and all $k$. Assume that there exists a function  $V$ such that
$V\in C^{2}(D)$,
$$
\lim_{k\to\infty}\inf_{D_k\setminus D_{k-1}}V(x)=+\infty,
$$
and for some number $K>0$ and all $(x, t)\in D\times(0, T)$ one has
inequality
$$
LV(x, t)\le K+KV(x).
$$
Then the class  $\mathcal{M}_{\nu}$,
where $\nu$ is a probability measure on  $D$, consists of exactly one element
$\mu=(\mu_t)_{t\in(0, T)}$. Moreover, for a.e. $t$ the identity
$$
\mu_t(D)=\nu(D)+\int_0^t\int_Dc(x, s)\,d\mu_s\,ds
$$
holds. In particular, if $c=0$, the measures $\mu_t$ are probabilities for a.e. $t$.
\end{theorem}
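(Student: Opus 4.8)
The plan is to assemble the statement from the two halves already proved: existence via Theorem~\ref{th1-2} and uniqueness via Theorem~\ref{th2-2} (case (iii)), the only genuinely new point being that under a bounded drift the restricted uniqueness class $\mathbb{M}_{\nu}$ coincides with the full class $\mathcal{M}_{\nu}$.

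First, for existence I would verify that the present hypotheses imply those of Theorem~\ref{th1-2}. Condition (H1) gives the local uniform ellipticity $(A(x,t)y,y)\ge m_k|y|^2$ and, through the upper bound $M_k$, the local boundedness of $A$; combined with (H2) this yields $a^{ij}(\,\cdot\,,t)\in W^{1,\infty}(D_k)\subset W^{1,p}(D_k)$ with $\sup_{t}\|a^{ij}(\,\cdot\,,t)\|_{W^{1,p}(D_k)}<\infty$. Since each $D_k$ is bounded, $b\in L^{\infty}(D_k\times[0,T])$ forces $b\in L^{p}(D_k\times[0,T])$, and $c\in L^{p/2}_{loc}(D\times(0,T))$ supplies the needed integrability of $c$ on each cylinder. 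Hence Theorem~\ref{th1-2} applies and produces a solution, so $\mathcal{M}_{\nu}\neq\emptyset$. I also note that (H1) guarantees that every element of $\mathcal{M}_{\nu}$ is given by a density, as is needed below.

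The key gluing step is to show $\mathbb{M}_{\nu}=\mathcal{M}_{\nu}$. For any $\mu=(\mu_s)\in\mathcal{M}_{\nu}$ each $\mu_s$ is a subprobability measure, so for every $k$
$$
\int_{D_k\times[0,T]}|b|^2\,d\mu\le \|b\|_{L^{\infty}(D_k\times[0,T])}^2\,\mu(D_k\times[0,T])<\infty,
$$
whence $b\in L^2(\mu,D_k\times[0,T])$ and $\mu\in\mathbb{M}_{\nu}$. This identification is the step I expect to be the main (if modest) obstacle: the uniqueness machinery was developed only on $\mathbb{M}_{\nu}$, and the whole argument hinges on transferring it to all of $\mathcal{M}_{\nu}$, which is precisely what the bounded drift buys.

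Finally, uniqueness and the mass identity follow from Theorem~\ref{th2-2} in case (iii): the hypotheses (H1), (H2), $b\in L^{p}_{loc}$, $c\in L^{p/2}_{loc}$ are in force, and the Lyapunov bound $LV\le K+KV$ is exactly condition (iii). Its proof invokes Theorem~\ref{th-est} to establish the identity (\ref{main-e}) for each density solution; the integrability requirement $V(\,\cdot\,,0)\in L^1(\nu)$ is arranged by first shifting $V$ to be non-negative (permissible since $c\le 0$, so adding a positive constant preserves $LV\le K+KV$) and then replacing it, if necessary, by the $\nu$-integrable Lyapunov function furnished by Proposition~\ref{lem2-Lyap_f}. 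Combining the pieces, $\mathcal{M}_{\nu}=\mathbb{M}_{\nu}$ is nonempty and contains at most one element, hence is a singleton $\mu=(\mu_t)$, and for a.e.\ $t$ one has $\mu_t(D)=\nu(D)+\int_0^t\int_D c\,d\mu_s\,ds$; when $c=0$ this reduces to $\mu_t(D)=\nu(D)=1$, so the $\mu_t$ are probability measures.
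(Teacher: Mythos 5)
Your proof is correct and takes essentially the same route as the paper, which presents Theorem \ref{ExiUni} precisely as a combination of Theorem \ref{th1-2} (existence) and Theorem \ref{th2-2}, case (iii) (uniqueness). The details you supply --- checking the hypotheses of Theorem \ref{th1-2} via (H1)--(H2) and boundedness of $D_k$, observing that bounded drift gives $\mathbb{M}_{\nu}=\mathcal{M}_{\nu}$ (stated in the paper as a remark right after the definition of $\mathbb{M}_{\nu}$), and securing $V(\,\cdot\,,0)\in L^1(\nu)$ by shifting $V$ and invoking Proposition \ref{lem2-Lyap_f} --- are exactly the points the paper leaves implicit.
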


\begin{remark}\rm
In Remark \ref{rm1} we discussed a construction  of a
solution given by a density  $\varrho\in L^{\infty}([0, T], L^p(D))$ in the case
of a degenerate diffusion matrix. Following \cite{DL} and \cite{LeBrL}, one can
find sufficient conditions for the uniqueness of a solution. Suppose that, in
addition to the conditions from  Remark \ref{rm1},
there exists a function $V\in C^{2}(D)$ such that
$$
\lim_{n\to\infty}\inf_{D_n\setminus D_{n-1}}V(x)=+\infty
$$
and for some number $K>0$ and all  $(x, t)\in D\times(0, T)$ on has
$$
L_0V(x, t)\ge -K- KV(x), \quad |\sqrt{A(x, t)}V(x)|\le KV(x),
$$
where, as
above, $L_0 \psi=a^{ij}\partial_{x_i}\partial_{x_j}\psi+b^i\partial_{x_i}\psi$.
Then a solution of class $L^{\infty}([0, T], L^p(D))$ is unique.

Indeed, one can show that, for any solution  $\varrho$ to the Cauchy problem with zero initial
condition and  $\psi\in C^{\infty}_0(D)$,  the following equality holds:
$$
\int_D|\varrho(x, t)|^p\psi(x)\,dx\le\int_0^t\int_{D}\bigl[L_0\psi +\psi(pc+(p-
1){\rm div} h)^{+}\bigr]|\varrho|^p\,dx\,dt.
$$
Let $\psi_N(x)=\zeta(N^{-1}V(x)),$
where  $\zeta\in C^{\infty}_0(\mathbb{R})$ is a nonnegative function
such that $\zeta(z)=1$ if $|z|<1$ and $\zeta(z)=0$ if $|z|>2$, $0\le \zeta\le 1$,
moreover,  $\zeta'(z)\le 0$. We observe that for some number $C_1>0$ and all
$(x, t)\in D\times(0, T)$ we have
$$
L_0\psi(x, t)=N^{-1}\zeta'(N^{-1}V(x))L_0V(x, t)
+N^{-2}\zeta''(N^{-1}V(x))|\sqrt{A(x, t)}\nabla V(x)|^2\le K_1.
$$
Hence,
$$
\int_D|\varrho(x, t)|^p\psi_N(x)\,dx\le
K_1\int_0^t\int_{N<V<2N}|\varrho|^p\,dx\,dt+
\int_0^t\int_{D}\psi_N(pc+(p-1){\rm div} h)^{+}|\varrho|^p\,dx\,dt.
$$
Letting $N\to\infty$, we
arrive at the equality
$$
\int_D|\varrho(x, t)|^p\,dx\le\int_0^t\int_{D}(pc+(p-1){\rm div} h)^{+}|\varrho|^p\,dx\,dt.
$$
Grownwall's inequality yields that
$$
\int_D|\varrho(x, t)|^p\,dx=0
$$
and $\varrho\equiv 0$. This means exactly the
uniqueness of a solution.
\end{remark}

\begin{remark}\label{rm_erg}\rm
Let $c=0$, let $a^{ij}, b^i\in C(D)$, and let ${\rm det}A$
be nonvanishing. Assume also that a function  $V$ is such
that  $V\in C^{2}(D)$ and
$$
\lim_{n\to\infty}\inf_{D_n\setminus D_{n-1}}V(x)=+\infty.
$$
Suppose that for some number  $K>0$ and some number $n$ the
estimate  $LV(x)\le -KV$ holds for every  $x\in D\setminus D_n$.
Then, for every
probability measure $\nu$, there exists a unique solution to the Cauchy problem
$\mu=(\mu_t)_{t\in(0, +\infty)}$ given by probability measures $\mu_t$. Moreover, the
solution is ergodic, i.e.,
the measures
$$
\sigma_t(dx)=t^{-1}\int_0^t\mu_s(dx)\,ds
$$
converge weakly to a
probability solution $\mu$ of the stationary equation $L^{*}\mu=0$ on $D$ as
$t\to+\infty$.

The existence and uniqueness of a solution on  $(0, +\infty)$ follow from the
theorems above. The solution is a probability solution by the fact that $c=0$ and the
existence of a Lyapunov function. The Grownwall's inequality and a reasoning,
similar to the proof of Theorem \ref{th-est} yield that
$$
\int_{D}V(x)\,d\sigma_t=t^{- 1}\int_0^t\int_{D}V(x)\,d\mu_s\,ds\le K_1,
$$
where $K_1$ does not depend on $t$.
Hence, the family  $\sigma_{t}$ is uniformly tight
and each sequence $\sigma_{t_n}$ contains a subsequence
 weakly convergent to some measure $\sigma$.
Obviously, $\sigma$ satisfies the equation $L^{*}\sigma=0$.
By the uniqueness of a probability solution $\sigma$ the whole subsequence
$\sigma_{t_n}$ converges to it (the uniqueness for
stationary equations follows from the existence of a Lyapunov function and in
the case of an arbitrary domain $D$  can be justified in the same way as in
\cite{BRSH-Servy-e} for $D=\mathbb{R}^d$) .
\end{remark}

\begin{remark}\label{rm_KCh}\rm
Suppose that all conditions sufficient for the existence and uniqueness in the
class $\mathcal{M}_{\nu}$ are fulfilled for  $L$. Let
$\mu_{s, y}$ denote the solution of the  Cauchy problem with the initial condition
$\mu|_{t=s}=\delta_y$. We observe that for every Borel set $B$ the mapping
$(s, y)\mapsto\mu_{s, y}(B)$ is measurable as a limit of measurable mappings which
correspond to the solutions $\mu_{s, y}^n$ of the approximating problems with
smooth coefficients (in the existence theorems the solution is constructed in
exactly the same way; the fact that the constructed solution is unique ensures
convergence of the whole sequence and not only of some subsequence). Moreover,
$\mu_{s, y}^n$ satisfy the Kolmogorov--Chapman equations (see, for example,
\cite{Fr}). This ensures that  $\mu_{s, y}$ also satisfies these equations.
\end{remark}

To conclude, we shall consider some more examples. We  begin with the example
from  \cite{K-G}, which has already been mentioned in the introduction. We point out once
again that this example motivated our investigation.

\begin{example}\rm
 Let  $\nu$ be a probability measure on  $D=(-1, 1)$. Given   $\alpha>0$,
we consider the Cauchy problem
\begin{equation}\label{eq:CP-GK}
\partial_{t}\mu_{t}=
\frac{1}{2}\partial_{xx}\left(\left|1-\left|x\right|\right|^{2\alpha}\mu\right)
-\partial_{x}\left(\left({\rm tg}\left(-\frac{\pi x}{2}\right)+{\rm sign}\, x\right)\mu\right),
\quad \mu_{0}=\nu.
\end{equation}

Note that the coefficients in the above equations are rather irregular. The
drift coefficient is  discontinuous  at $x=0,\, x=2k+1$. Moreover, the diffusion
coefficient does not satisfy the linear growth condition for $\alpha>1/2$  and
is not H\"older continuous with exponent $1/2$ if $\alpha<1/2$.

Let us show that this  Cauchy problem has a unique probability solution
$\left\{\mu_{t}\right\} _{t\in[0,T]}$
in the domain  $D=(-1,1)$ for every $T>0$.

We introduce the following exhaustion $\left\{D_{k}\right\} _{k\in\mathbb{N}}$ of the domain  $D$:
$$
D_{k}=\left(-1+2^{-k},\,1-2^{-k}\right).
$$
Note that the diffusion coefficient is non-degenerate on
each cylinder $D_{k}\times[0,T]$.
Moreover, the local regularity assumptions from
Theorem \ref{ExiUni} are fulfilled. Consider the following Lyapunov function:
$$
V(x)=\frac{2-x^{2}}{1-x^{2}}.
$$
Let us show that all the assumptions of Theorem  \ref{ExiUni} are fulfilled for this Lyapunov function.

1) $V>0$ in $D$, $V\in C^{2}\left(D\right)$ and
$$
\lim\limits _{k\rightarrow\infty}\inf\limits _{D_{k}\backslash D_{k-1}}V(x)=
\lim\limits _{k\rightarrow\infty}
\frac{2-\left(1-2^{-k}\right){}^{2}}{1-\left(1-2^{-k}\right){}^{2}}=+\infty.
$$

2) The following estimate is true:
$$
LV(x)=2^{-1}\bigl|1-
|x|\bigr|^{2\alpha}V''(x)+\bigl({\rm tg}(-\pi x/2)+
{\rm sgn}x\bigr)V'(x)\leq K_{1}\cdot V(x)
$$
for some constant $K_{1}>0$
as 
\begin{equation}\label{eq:lim-lyap}
\lim\limits _{|x|\rightarrow1}
\frac{LV}{V}=-\infty.
\end{equation}
Hence we can apply  Theorem \ref{ExiUni}
and obtain that (\ref{eq:CP-GK})
has a unique subprobability solution.
Furthermore,  $c=0$ and thus
for a.e. $t\in[0,T]$ the solution
$\mu_{t}$ is a probability measure.

Further, the solution is ergodic in the above sense.
Indeed, due to (\ref{eq:lim-lyap}), there exist a
\emph{positive} constant $K_{2}$
and a number
$k\in\mathbb{N}$ such that on  $D\backslash D_{k}$ one has
$LV\leq-K_{2}\cdot V$.
According to Remark \ref{rm_erg}, this inequality ensures the weak
convergence of measures
$$
\sigma_{t}(dx)=\frac{1}{t}\int_{0}^{t}\mu_{s}(dx)ds
$$
to a probability measure  $\sigma$ on $D$ which solves the stationary
equation $L^{*}\sigma=0$, as $t\rightarrow+\infty$.
\end{example}

\begin{example}\rm
Let  $\nu$ be a probability measure on $D=\mathbb{R}^{d}$.
Consider the Cauchy problem
$$
\partial_{t}\mu_{t}=\partial_{x_i}\partial_{x_j}(a^{ij}\mu)-
\partial_{x_i}(b^i\mu)+c\mu, \quad \mu|_{t=0}=\nu.
$$
Suppose that  (H1) and (H2) are fulfilled and that
$$
c\in L^{p/2}_{loc}(\mathbb{R}^d\times(0, T)),
\quad b\in L^{\infty}(B(0, k)\times[0, T])
$$
for some $p>d+2$ and all $k$, where $B(0, k)$ is the ball  of radius $k$ centered at
the origin. Let  $V(x)=|x|^2/2$. Then the condition $LV\le K+KV$ takes the form
$$
{\rm tr}A(x, t)+(b(x, t), x)+|x|^2c(x, t)/2\le K+K|x|^2
$$
for all $(x,t)\in \mathbb{R}^d\times[0,T]$.
If the latter inequality holds,
then the set $\mathcal{M}_{\nu}$ consists of exactly one element.
\end{example}

The authors are grateful to V.I. Bogachev for fruitful discussions
and valuable remarks.

The work was partially supported by the RFBR grant 12-01-33009. The second
author was also partially supported by the Simons-IUM Fellowship,
the RFBR grant 11-01-00348-a and program SFB of Bielefeld University.

\end{document}